\theoremstyle{definition} 
\newtheorem{theorem}{Theorem}
\newtheorem{lemma}{Lemma}
\newtheorem{proposition}{Proposition}
\newtheorem{chipfiringgame}{Chip-Firing Game}
\newtheorem{exploringfurther}{Exploring Further}
\newcommand{\sn}{\mathrm{sn}}
\DeclareMathOperator{\outdeg}{outdeg}
\DeclareMathOperator{\val}{val}
\DeclareMathOperator{\tw}{tw}
\DeclareMathOperator{\gon}{gon}
\DeclareMathOperator{\scw}{scw}
\DeclareMathOperator{\tetrahedron}{\includegraphics[height=0.1383in]{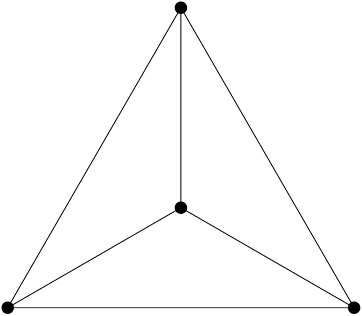}_T}
\DeclareMathOperator{\octahedron}{\includegraphics[height=0.1383in]{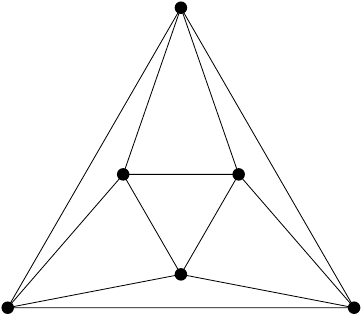}_O}
\DeclareMathOperator{\cube}{\includegraphics[height=0.1383in]{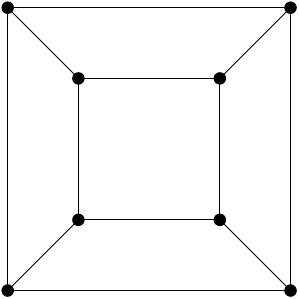}_C}
\DeclareMathOperator{\dodecahedron}{\includegraphics[height=0.1383in]{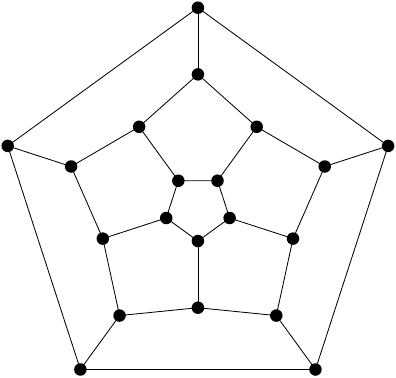}_D}
\DeclareMathOperator{\icosahedron}{\includegraphics[height=0.1383in]{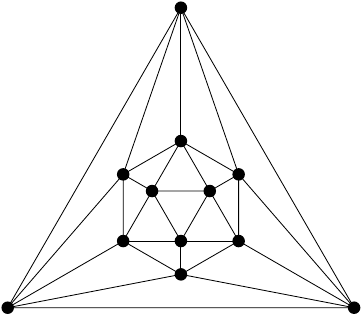}_I}
\title{Chip-firing on the Platonic solids:  a primer for studying graph gonality}
\author{Marchelle Beougher, Kexin Ding, Max Everett, Robin Huang, Chan Lee, Ralph Morrison, and Ben Weber}
\date{}
\begin{document}

\maketitle

\begin{abstract}
    This paper provides a friendly introduction to chip-firing games and graph gonality.  We use graphs coming from the five Platonic solids to illustrate different tools and techniques for studying these games, including independent sets, treewidth, scramble number, and Dhar's burning algorithm.  In addition to showcasing some previously known results, we present the first proofs that the dodecahedron graph has gonality $6$, and that the icosahedron graph has gonality~$9$.
\end{abstract}

\section{Graphs and chip-firing}

Take a piece of paper, and draw some dots.  Then, draw some curves connecting some dots to one another--those curves can be straight or not, and it's fine if they cross.  What you end up with is called a \emph{graph}.  To put some names to things, each dot is called a \emph{vertex}, and each curve connecting two vertices is called an \emph{edge}.  Usually we'll write $G$  for the graph, $V$ or $V(G)$ for the set of vertices, and $E$ or $E(G)$ for the set of edges.  The \emph{valence} of a vertex $v\in V(G)$, written $\val(v)$, is the number of edges with $v$ as an endpoint.  Two graphs are drawn in Figure \ref{figure:two_graphs}, each with $6$ vertices and $7$ edges.  The first has vertices with valences $3$, $3$, $3$, $3$, $1$, and $1$, and the second $3$, $3$, $2$, $2$, $2$, and $2$.

\begin{figure}[hbt]
    \centering
    \includegraphics{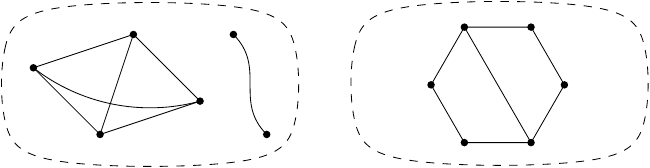}
    \caption{Two graphs, each with six vertices and seven edges}
    \label{figure:two_graphs}
\end{figure}

There are lots of rules we'll have to set for drawing our graphs.  Can they be made up of multiple pieces, like the graph on the left?  Are we allowed to draw multiple edges between vertices, or an edge from  a vertex to itself?  Should we keep track of the directions of edges, so that one vertex is the start and the other vertex is the end?  And do we have to limit ourselves to finite graphs, or could we have \emph{infinitely many} vertices are edges? Depending on what paper you're reading or what graph theorist you're talking to, you'll find different answers to these questions!

For us, here will be our rules:  we'd like our graphs to be \emph{connected}, meaning intuitively they're made up of a single piece; more precisely, for any pair of vertices, we can travel from one to the other using some number of edges.  So the graph on the right in Figure \ref{figure:two_graphs} is fine, but not the graph on the left.  We're fine with multiple edges  connecting a pair of vertices, but not with a ``loop'' edge connecting a vertex to itself; some people call graphs of this form \emph{multigraphs}, or if loops are allowed, \emph{pseudographs}.  We're not assigning any direction to our edges, so our graphs are \emph{undirected}; think of each edge as a two-way street, with things being able to move in either direction.  Finally, to keep things a little easier, we'll assume the vertex and edge sets are both finite.

Now that we've set our rules, we can draw a graph and play a \emph{chip-firing game} on it.  On each vertex, place some number of poker chips.  We don't allow fractional chips, but negative numbers are allowed, perhaps representing debt.  Such a placement is called a \emph{chip-configuration} or a \emph{divisor}\footnote{It's probably not clear why we would  call this a ``divisor''; what does our set-up  have to do with dividing or divisibility?  We're borrowing the word from another field of math, called \emph{algebraic geometry}, which studies polynomial equations and solutions to them.  As explored in \cite{bn}, there's a surprisingly deep connection between algebraic geometry and chip-firing!} on the graph.  We represent these chips with integer labels on the vertices, usually leaving off a label of $0$ when the vertex has $0$ chips. We then move chips around on $G$ via \emph{chip-firing moves}.  To perform such a move, we pick a vertex of $G$, and have it donate chips to its neighbors, one along each edge touching the vertex.  You can see an example of a few chip-firing moves in Figure \ref{figure:chip_firing_moves}, where vertex \(v\) is fired followed by vertex \(w\).  You can check that if we'd fired vertex \(w\) first and vertex \(v\) second, we would've wound up with the same chip placement--it turns out that the order of firing moves doesn't matter!

\begin{figure}[hbt]
    \centering
    \includegraphics{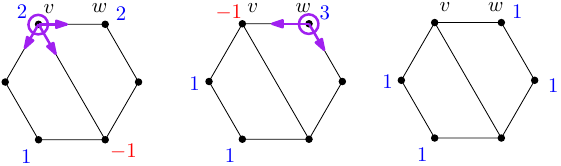}
    \caption{Three divisors related by chip-firing moves}
    \label{figure:chip_firing_moves}
\end{figure}

If we can perform a sequence of chip-firing moves to turn a divisor $D$ into another divisor $D'$, we say that $D'$ is \emph{equivalent} to $D$.  This choice of terminology leads to a natural question:  if $D'$ is equivalent to $D$, then is $D$ equivalent to $D'$?  In other words, can we ``reverse'' chip-firing moves It turns out the answer is yes!  To undo firing at a vertex $v$, simply fire all the other vertices (in any order).  The effect of \emph{all} these chip-firing moves is that every vertex fires once, so every chip that gets donated gets donated back, and we're back where we started\footnote{It turns out that chip-firing equivalence forms what's called an \emph{equivalence relation}.  Here we were discussing a property called symmetry, but it also satisfies reflexivity and transitivity.}. 

There are lots of solitaire games we can play using chip-firing moves.  For instance, given a divisor, how can we perform chip-firing moves to make it so every vertex is out of debt, as done in Figure \ref{figure:chip_firing_moves}?  Or if it's impossible, how do we know?

\begin{chipfiringgame}[The Dollar Game] Given a placement of chips $D$ on a graph \(G\) can we perform chip-firing moves to eliminate all debt in the graph?
\end{chipfiringgame}

The sequence of chip-firing moves in Figure \ref{figure:chip_firing_moves} can be viewed as an instance of winning the Dollar Game, starting from the leftmost chip placement, since the final chip placement has no debt.

There are  times when we can quickly come up with an answer to the Dollar Game.  For instance, if there is no debt to start, we have already won!  Or, if the total number of chips is negative, then there's no way to win--the total number of chips cannot be changed by chip-firing moves, so there will always be debt somewhere.  It turns out there are lots of different methods to win this game (or to prove it's unwinnable) in general; you can read about a few of them in \cite[Chapter 3]{sandpiles}.  We'll present one strategy, sometimes called \emph{Dhar's burning algorithm} \cite{dhar}.

\begin{enumerate}
    \item First, pick a vertex \(q\) in the graph, and  get every vertex besides \(q\) out of debt.  (The idea here is to fire \(q\) lots and lots of times to generate chips elsewhere, and then to move them around the graph to eliminate any other debt.  See if you can argue this is always possible!)  If $q$ is out of debt, we are done; otherwise move on to step (2).

    \item  Start a ``fire'' at \(q\), so that the vertex $q$ is burning.  The fire spreads through the graph according to the following rules:  
    \begin{itemize}
        \item Edges are very flammable, so any edge touching a burning vertex will burn.
        \item  Vertices can protect themselves by using their chips as ``firefighters'', each of which can fight off one burning edge.  In particular, as long as a vertex has as many or more chips as there are burning edges touching it, that vertex is safe.  But as soon as there are more burning edges than chips, that vertex burns.
    \end{itemize}
    \item If the whole graph burns, stop the algorithm. If the burning process terminates and some vertices remain unburned, chip-fire every one of those vertices.  If debt is eliminated on \(q\), then we're done!  If not, then we repeat the process from step (2).
\end{enumerate}

It turns out that one of two things will happen.  Either:  debt will be eliminated on \(q\), and we've won the Dollar Game; or eventually the whole graph will burn while debt is still on \(q\), meaning the Dollar Game is unwinnable. (That shouldn't be obvious--it takes a proof to conclude that!)

\begin{figure}[hbt]
    \centering
    \includegraphics{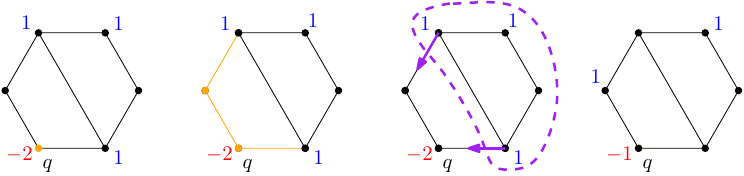}
    \caption{An example of Dhar's burning algorithm}
    \label{figure:dhars_burning}
\end{figure}

An illustration of Dhar's burning algorithm is presented in Figure \ref{figure:dhars_burning}.  All debt is already on the vertex $q$, so we begin the burning process.  The edges incident to $q$ burn, as does the one incident unchipped vertex and one more edge, but then the burning process stops there since any unburned vertex incident to burning edges has at least as many chips as there are burning edges.  That set of vertices is then fired to give a new divisor.  To complete the algorithm, we would run the burning process again since $q$ is still in debt; this time, the whole graph would burn.  Thus the Dollar Game starting with this divisor cannot be won, no matter how many chip-firing moves we perform!

Step (4) of our process highlights a very important idea in chip-firing games:  \emph{set-firing moves}.  These occur when rather than saying ``fire this vertex, then that vertex, then that vertex'', we instead say ``let $S$ be a collection of vertices, and fire every vertex in $S$''.  Really that means to fire the vertices in $S$ in any order, since the order of firing won't matter.  The key thing to observe is that if two vertices connected by an edge are both fired, then a chip moves from each vertex to the other, cancelling out.  So, the only net movement of chips is along edges connecting a vertex in $S$ to a vertex outside of $S$.  This is actually why no new debt is introduced in step (3):  chips only move from unburned vertices to burned vertices, and the way our burning process works there will always be enough chips to accomplish this!

The game we'll be focusing on in this article is one for two players, and is called the \emph{Gonality Game}.  

\begin{chipfiringgame}[The Gonality Game] Let \(G\) be a graph, and \(N\) a nonnegative integer.

\begin{enumerate}
    \item Player A places $N$ chips on the vertices of $G$.
    \item  Player B adds $-1$ chips, possibly putting a vertex in debt.
    \item  Player A makes any number of chip-firing moves on the resulting divisor.
\end{enumerate}
If Player A can remove all debt, then they win; if they can't, then Player B wins. Note that by the time we get to phase (3), Player A is just playing an instance of the Dollar Game.
\end{chipfiringgame}

\begin{figure}[hbt]
    \centering
    \includegraphics[scale=0.8]{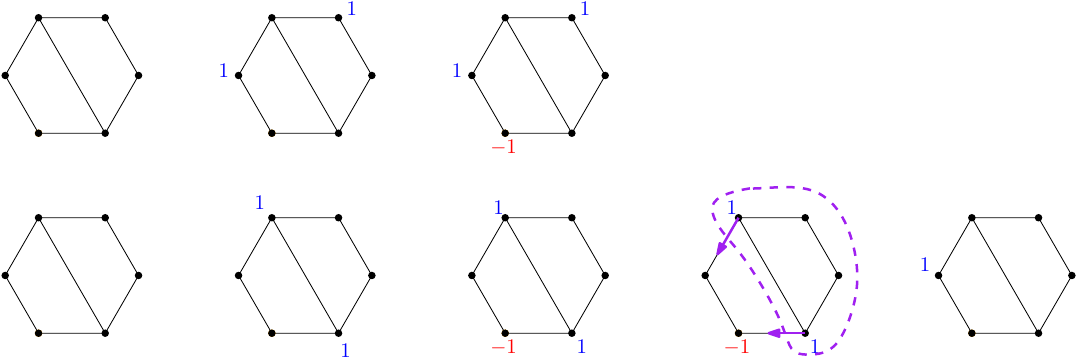}
    \caption{Two examples of the Gonality Game, one where Player B wins and one where Player A wins}
    \label{figure:gonality_game_examples}
\end{figure}

Two run-throughs of the Gonality Game are illustrated in Figure \ref{figure:gonality_game_examples}, one on the top and one on the bottom.  In both instances, Player A places $2$ chips, and Player B places $-1$ chips. By Dhar's burning algorithm, Player A cannot eliminate debt in the top game, so Player B wins that Gonality Game.   In the bottom game, Player A can succeed in eliminate debt as illustrated, so Player A wins that Gonality Game.

 Let's assume that Player A is very smart about their initial placement, and Player B is very smart about where they put the $-1$.  For instance, after Player A places their chips, Player B could think through every possible vertex to place their \(-1\) on, and use Dhar's  Burning Algorithm on each placement to see if any of the resulting Dollar Games are unwinnable;  if any unwinnable placements exist, Player B would choose that.  And stepping further back, Player A could pre-empt Player B by thinking through \emph{every} placement of \(N\) chips, followed by \emph{every} subsequent placement of \(-1\) chips, and check if any placement of \(N\) chips remains winnable regardless of where the \(-1\) is placed.  (It's worth noting that that's a lot of placements for Player A to think through, assuming $N$ is large!)
 
 The main factor that will determine the outcome of the game is then the number $N$ of chips Player A gets to play.  In one extreme case, if $N=0$, then Player A will lose, since Player B can play anywhere and debt will never be eliminated (since the total number of chips is negative).
Towards the other extreme, if $N$ is equal to the number of vertices, then Player A can place a chip on every vertex and win automatically, since Player B's placement of $-1$ chips does not introduce debt.  Somewhere in the middle then, there must be a changeover:  a value of $N$ such that Player A has a winning strategy with $N$ chips, but not with $N-1$ chips.  This minimum possible $N$ is called the \emph{gonality} of $G$, written $\textrm{gon}(G)$ (or in some papers, $\textrm{dgon}(G)$, for \emph{divisorial gonality}.)

For our first computation of the gonality of a graph, let's use the graph from Figure \ref{figure:gonality_game_examples}.  Player A managed to win the second game, and in fact would have won regardless of where Player B had placed their $-1$.  That means the divisor of degree $2$ played by Player A wins the Gonality Game, and so $\gon(G)\leq 2$.  But could Player A win with just one chip, rather than two?  The answer is no--running Dhar's burning algorithm from any other vertex with $-1$ debt will burn the whole graph regardless of where a single chip is placed, so Player B will always be able to counter Player A's $1$-chip strategy.  Thus we know that $\gon(G)=2$.

It turns out that we already have the pieces to build an algorithm to compute the gonality of a graph!\footnote{This algorithm, or a slightly streamlined version of it, has been implemented on this website:  \url{https://chipfiringinterface.web.app/}.  Try it out!}

\begin{enumerate}
    \item Let \(N=1\).
    \item For every placement of \(N\) chips, check whether that placement wins the Dollar Game for every additional placement of \(-1\) chips.
    \item  If any placement from (2) was successful, then \(\gon(G)=N\).  Otherwise, increase \(N\) by \(1\), and go back to step \(2\).
\end{enumerate}
Sadly, this algorithm is very inefficient: since \(N\) might have to get very large, we are looking at lots and lots\footnote{It turns out exponentially many!} of possible chip placements.  Indeed, people have proved that computing the gonality of a graph is ``NP-hard'', meaning it's at least as hard as some problems that most people don't believe can be solved by fast algorithms (see \cite{gijswijt2019computing} for the first proof of NP-hardness, or \cite{echavarria2021scramble} for a slightly simpler proof).

So if someone gives us a graph $G$, how should we compute its gonality, without relying on a slow algorithm?  Well, suppose we suspect $\gon(G)=N$.  We can prove $\gon(G)\leq N$ by finding a chip placement with $N$ chips, and arguing it can eliminate added debt anywhere.  Simple enough to say, but sometimes hard in practice, so it'd be great to have general strategies for finding nice chip placements!  Then we have to prove $\gon(G)\geq N$, which is a bit harder.  The most straightforward approach would be to argue that no placement of \(N-1\) chips works, which might be manageable or might be too brute force; it'd be great to have other tools to lower bound gonality.



\begin{figure}[hbt]
    \centering
    \includegraphics[scale=0.49]{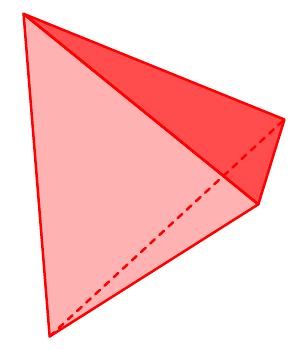}\quad\includegraphics[scale=0.69]{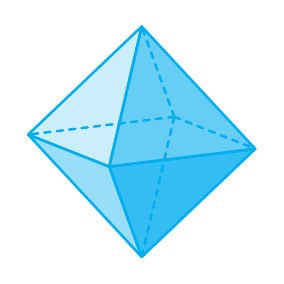}\quad\includegraphics[scale=0.49]{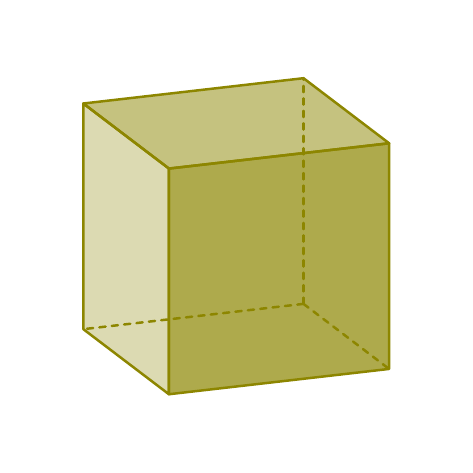}\quad\includegraphics[scale=0.39]{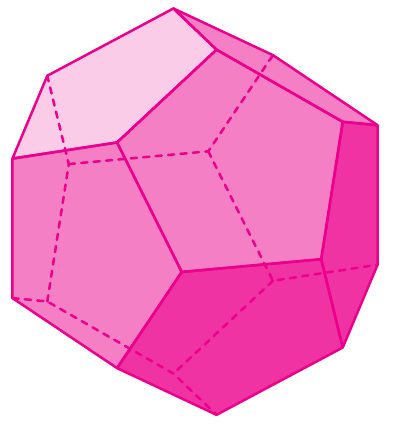}\quad\includegraphics[scale=0.39]{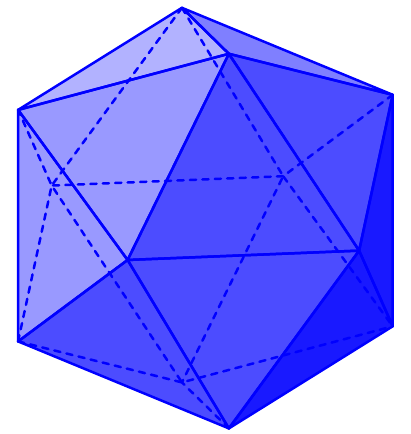}
    \caption{The five Platonic solids:  tetrahedron, octahedron, cube, dodecahedron, icosahedron}
    \label{figure:platonic_solids}
\end{figure}

In this article we showcase some results and techniques that let us achieve upper and lower bounds on gonality.  We will work with five graphs, namely those coming from the \emph{Platonic solids}.  These are the five  three-dimensional polyhedra whose faces are identical regular polygons, such that every corner of the solid looks like every other corner. These geometric figures are illustrated in Figure \ref{figure:platonic_solids}.  We can turn these into graphs by considering only their vertices and edges, as illustrated in Figure \ref{figure:graphs_of_platonic_solids}.

\begin{figure}[hbt]
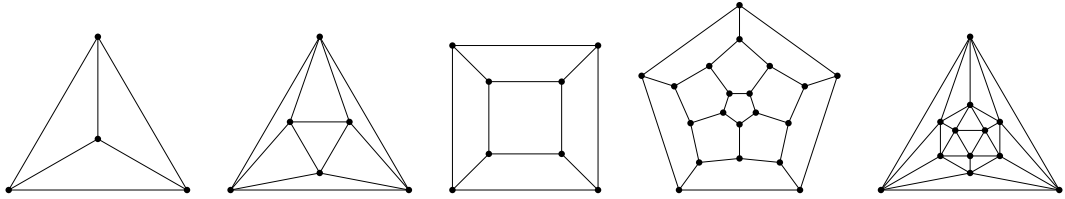

    \centering
    \includegraphics[scale=0.4]{standard_tetrahedron.pdf} \quad   \includegraphics[scale=0.4]{standard_octahedron.pdf} \quad   \includegraphics[scale=0.4]{standard_cube.pdf} \quad   \includegraphics[scale=0.4]{standard_dodecahedron.pdf} \quad   \includegraphics[scale=0.4]{standard_icosahedron.pdf} 
    \caption{The graphs of the Platonic solids}
    \label{figure:graphs_of_platonic_solids}
\end{figure}

Along the way to finding the gonalities of these Platonic graphs, we will think about other  families of graphs into which these fall, and showcase what we know (and what we don't yet know!) about their chip-firing properties.  Throughout, we'll denote each Platonic graph with a miniature picture of the graph, along with a subscript of the first letter of that solid in case the picture is hard to make out.  So,
\[\tetrahedron,\octahedron,\cube,\dodecahedron,\icosahedron\]
denote the tetrahedron graph, the octahedron graph, the cube graph, the dodecahedron graph, and the icosahedron graph, respectively.

Here are some of the tools and ideas you'll see in each section.

\begin{itemize}
    \item The tetrahedron: a ``Dhargument'', or an argument based on Dhar's burning algorithm;  complete graphs; parking functions.
    \item The octahedron: independent sets; treewidth; minimum degree; complete multipartite graphs.
    \item The cube: product graphs; scramble number; hypercube graphs.
    \item The dodecahedron; a more involved scramble number proof.
    \item The icosahedron: a more invovled Dhargurment; the Archimedean solids and higher dimensional Platonic solids.
\end{itemize}

\noindent \textbf{Acknowledgements.} The authors were supported by NSF Grants DMS-1659037, DMS-2011743, and DMS-2241623.  The Platonic solids in Figure \ref{figure:platonic_solids} were made using TeX code by Sebastian Tronto, available online \cite{platonic_solids_github}, and the truncated icosahedron in Figure \ref{figure:truncated_icosahedron} was made using GeoGebra\footnote{\url{https://www.geoge
bra.org/}}.

\section{The Tetrahedron}

Our first Platonic solid is the tetrahedron.  The boundary of this solid consists of four triangular faces, meeting along a total of six edges and at four vertices.  The underlying graph $\tetrahedron$ thus consists of $4$ vertices, every pair of which is connected by an edge.  This makes it a \emph{complete graph}, namely the complete graph on $4$ vertices $K_4$.  In general, the complete graph on $n$ vertices is written $K_n$.  You can see the first few complete graphs illustrated in Figure \ref{figure:complete_graphs}.

\begin{figure}[hbt]
    \centering
    \includegraphics{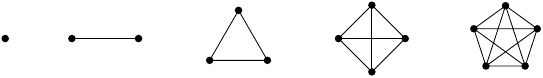}
    \caption{The complete graphs \(K_1\) through \(K_5\).  The tetrahedron graph is \(K_4\).}
    \label{figure:complete_graphs}
\end{figure}

Complete graphs are a very nice family of graphs to study if we want to compute gonality.  As it turns out, the gonality of $K_n$ is equal to $n-1$. To win the Gonality Game, one strategy is to put \(n-1\) chips on a single vertex \(v\).  Then, no matter where Player B places their \(-1\) chip (besides \(v\)), you can chip-fire \(v\) to move a chip to every vertex besides \(v\), thus winning the game.   To show that we can't do any better than $n-1$ chips, let's try using Dhar's burning algorithm.  

If Player A places fewer than $n-1$ chips, there will be at least two vertices with zero chips.  Once Player B places $-1$ chips on one of these unchipped vertices, run Dhar's burning algorithm.  If the whole graph burns, then Player B wins. 

Let's suppose for a moment the whole graph doesn't burn; we'll try to reach a contradiction. We then end up with some positive number of unburned vertices, let's say $k$ of them, meaning that \(n-k\) vertices are burned. Since the fire has stabilized, and every vertex is connected to every other vertex, we know each unburned vertex has at least \(n-k\) chips to fight off the burning edges. That means there must be a total of at least $k\cdot (n-k)$ chips in our placement.  You can prove (using calculus, give it a try!) that for $k$ an integer between $0$ and $n$, we have $k\cdot (n-k)$ is always at least $n-1$, except when \(k=0\) and \(k=n\). But we know $k\geq 1$ and $k\leq n-1$, since at least some of the graph burned without the whole graph burning. That means we have at least \(n-1\) chips, a contradiction since Player A only placed $n-2$!  That means the whole graph must have burned, so Player B wins, and Player A can't win with fewer than \(n-1\) chips.

This means that the gonality of $K_n$ is at least $n-1$, and we already argued that it is at most $n-1$. Thus, the gonality of $K_n$ is equal to $n-1$, and the gonality of the tetrahedron is equal to $3$:
\[\gon(\tetrahedron)=3.\]

A natural question to ask is whether there are any other strategies to win the Gonality Game on a tetrahedron with \(3\) chips.  We can pick any vertex \(v\) and place \(3\) chips on it (so that's four possibilities), or place \(1\) chip on every vertex besides \(v\) (that's four more possibilities, although they're the same as the other four up to chip-firing).  See if you can prove that these are the only solutions, not just for the tetrahedron but for $K_n$!  That is, prove that with \(n-1\) chips on \(K_n\), the only ways to win the Gonality Game are to put all the chips on one vertex, or all the chips on different vertices.  (Hint: try to adapt the Dhar's burning algorithm argument we used.)

\begin{exploringfurther}[\textbf{Chip-firing games and parking functions}]
    It turns out there's lots more to say about chip-firing games on complete graphs.  Let's play a slightly different chip-firing game.  Say we have a complete graph with vertices $v_1,\ldots,v_n$, and we place $-1$ chips on $v_n$.  Now let's try to place (nonnegative numbers of) chips on $v_1,\ldots,v_{n-1}$ so that the Dollar Game \emph{isn't} winnable.  For instance, if $n=4$, any of the placements in Figure \ref{figure:unwinnable_divisors} will work, and it turns out that's all of them (we include the $0$ labels here for clarity).

\begin{figure}[hbt]
    \centering
    \includegraphics{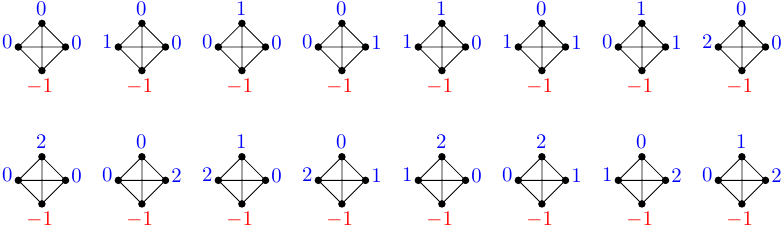}
    \caption{Unwinnable placements of chips on $K_4$}
    \label{figure:unwinnable_divisors}
\end{figure}

Let's represent those divisors as triples of integers, recording how many chips are on $v_1$, $v_2$, and $v_3$ (since $v_4$ always gets $-1$ chip, we'll ignore it).  We then have $16$ triples:
\[(0,0,0),\,(1,0,0),\,(0,1,0),\,(0,0,1),\,(1,1,0),\,(1,0,1),\,(0,1,1),\,(2,0,0),\,\]
\[(0,2,0),\,(0,0,2),\,(2,1,0),\,(2,0,1),\,(1,2,0),\,(0,2,1),\,(1,0,2),\,(0,1,2).\,\]
Add the number $1$ to each coordinate, giving $16$ more triples:
\[(1,1,1),\,(2,1,1),\,(1,2,1),\,(1,1,2),\,(2,2,1),\,(2,1,2),\,(1,2,2),\,(3,1,1),\,\]
\[(1,3,1),\,(1,1,3),\,(3,2,1),\,(3,1,2),\,(2,3,1),\,(1,3,2),\,(2,1,3),\,(1,2,3).\,\]
These triples are known as \emph{parking functions} of length $3$, and arise in many combinatorial contexts \cite{Handbook_of_enumerative_combinatorics}.  Remarkably, performing this process on $K_n$ always yields precisely the parking functions of length $n-1$.
\end{exploringfurther}

\section{The Octahedron}

Our next Platonic solid is the octahedron.  For the corresponding graph $\octahedron$, we're going to introduce two theorems that help us in studying gonality, one giving an upper bound and the other giving a lower bound.

For the upper bound, we're going to think about collections of vertices in the graph such that no two of them are connected by an edge; we call such a collection an \emph{independent set}.  It's always possible to find an independent set on a graph--for instance, you could take a set consisting of a single vertex.   Usually we can find larger sets, as long as our graph is not complete:  just pick two vertices that aren't joined by an edge.  Given a graph $G$, a great question to ask is:  how large of an independent set of vertices can we find in $V(G)$?  That maximum number, written \(\alpha(G)\), is called the \emph{independence number} of $G$.  Two independent sets on a graph are illustrated in Figure \ref{figure:independent_sets}, one of size \(2\) and one of size \(3\).  It turns out that \(3\) is the largest possible size of an independent set (try to argue this!), so for that graph \(G\) we have \(\alpha(G)=3\).

\begin{figure}[hbt]
    \centering
\includegraphics{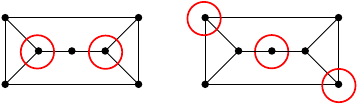}
    \caption{Two independent sets on the same graph.}
    \label{figure:independent_sets}
\end{figure}

The octahedron graph ends up having independence number \(2\).  We can certainly find an independent set of size \(2\) by picking two vertices that aren't connected by an edge; geometrically, this is like picking opposite corners of the solid.  To see we can't find any more than that, note that each vertex is connected to \emph{every other vertex but one}.  Thus if we had three vertices \(u,v,\) and \(w\), at least one of \(v\) and \(w\) would be connected to \(u\) by an edge.  Thus it is impossible to have an independent set  of size \(3\) or more.

Why do we care about independence number?  It turns out it lets us compute an upper bound on gonality!

\begin{theorem}[Proposition 3.1 in \cite{gonality_of_random_graphs}]\label{theorem:independence_number} Let $G$ be a simple\footnote{A \emph{simple} graph is one without multiple edges between any two vertices. For instance, all our Platonic graphs are simple.} graph with \(n\) vertices. We have \(\gon(G)\leq n-\alpha(G)\).
\end{theorem}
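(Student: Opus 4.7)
The plan is to exhibit a divisor of degree $n-\alpha(G)$ that wins the Gonality Game. Let $I \subseteq V(G)$ be an independent set of size $\alpha(G)$, and let $D$ be the divisor placing one chip on each vertex of $V(G)\setminus I$ and no chips on any vertex of $I$; this has degree $n-\alpha(G)$. To prove the bound, I would show that, no matter which vertex $v$ Player B chooses, the divisor $D-v$ resulting from Player B's move can be made effective by chip-firing.

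If $v \in V(G) \setminus I$, then $D-v$ is already effective, since the chip at $v$ simply goes from $1$ to $0$ and every other vertex is unchanged. The interesting case is $v \in I$, where $D-v$ has $-1$ at $v$. My strategy here is to run Dhar's burning algorithm from $v$: the fire ignites every edge out of $v$, but because $I$ is independent, every neighbor of $v$ lies in $V(G)\setminus I$, carries exactly one chip, and sees exactly one burning edge, so it remains safe. The fire halts with only $v$ burned, and the algorithm prescribes set-firing the set $V(G)\setminus\{v\}$ of unburned vertices.

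The proof then reduces to a direct computation of chip counts after this set-firing. Since chips only have net movement along edges joining $\{v\}$ to $V(G)\setminus\{v\}$, the outcome is: vertex $v$ gains $\val(v)$ chips (one from each neighbor, all of which are being fired), ending at $\val(v)-1\geq 0$; a vertex $w \in V(G)\setminus I$ that is adjacent to $v$ loses exactly one chip and ends at $0$, while any other vertex of $V(G)\setminus I$ stays at $1$; and a vertex $w\in I\setminus\{v\}$ loses no chips, because independence of $I$ forces $w$ to be non-adjacent to $v$, so it remains at $0$. Every count is nonnegative, so the debt at $v$ has been eliminated.

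The main structural point that makes everything go through — more the crux than an obstacle — is that the independence of $I$ simultaneously prevents the fire from spreading past $v$ in Dhar's algorithm and prevents the ensuing set-firing from putting any other vertex of $I$ into debt. Given this, the usual calculus-free check that $\val(v)\geq 1$ for every vertex of a connected graph with at least two vertices finishes the argument.
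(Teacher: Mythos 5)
Your proof is correct and follows essentially the same route as the paper: place one chip on each vertex outside a maximum independent set, then fire every vertex except the debt vertex $v$, using independence to see that each neighbor of $v$ has a chip to send and no other vertex of $I$ loses anything. The Dhar's-algorithm framing you add merely re-derives the same set-firing move, so it is a presentational difference rather than a different argument.
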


So for our octahedron, we have \(\gon(\octahedron)\leq 6-2=4\).  Why should this theorem hold for any simple graph?  Well, let's consider an independent set of maximum possible size.  Then, place a chip on every vertex that's \emph{not} in that independent set.  That placement has a total of \(n-\alpha(G)\) chips, and wins the Gonality Game:  Player B will have to place a chip on some vertex \(q\) in the independent set, so every neighbor of $q$ has a chip, and by firing \emph{every vertex besides $q$} we have a net movement of \(1\) chip from each neighbor of $q$ to $q$.  That eliminates the debt without introducing any new  debt, winning the Gonality Game.  See Figure \ref{figure:gonality_winning_octahedron} for a maximum independent set on the left, and the corresponding divisor winning the Gonality Game by placing a chip on every vertex outside the independent set.

\begin{figure}[hbt]
    \centering
    \includegraphics[scale=0.8]{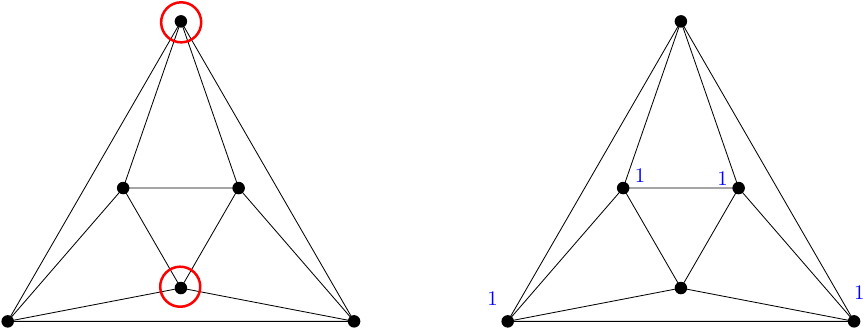}
    \caption{A maximum independent set on the octahedron, and the corresponding divisor that wins the Gonality Game}
    \label{figure:gonality_winning_octahedron}
\end{figure}

It's worth noting that \(\alpha(G)\) is not an easy number to compute for a general graph \(G\).  You might hope that you could build a maximum independent set by picking any vertex, then adding another vertex not incident to that one, and so on until no more vertices can be added.  But such a ``greedy'' algorithm might not always give you an independent set of size  \(\alpha(G)\).  This gets at the distinction between \emph{maximum}--meaning of the largest possible size--and \emph{maximal}--meaning nothing else can be added to it.  You can see an example of an independent set that is maximal, but not maximum, on the left in Figure \ref{figure:independent_sets} (with a maximum independent set on the right).

Now we turn to finding a lower bound on gonality, which will use another graph invariant: \emph{treewidth}.
Treewidth is a much-studied graph parameter that has been calculated for various families of graphs, and there exists software that can calculate the treewidth of sufficiently small graphs. Like gonality, treewidth is NP-Hard to compute; however unlike gonality, treewidth behaves nicely under different graph operations. Specifically, treewidth is \emph{minor monotone} (i.e., contracting or deleting edges as well as deleting vertices will always yield a graph with lower treewidth than the original). Before we can define treewidth, we must define brambles and how to calculate their bramble order.

Let $\mathcal{B}$ be a collection of subsets of vertices of $G$ such that each set of vertices forms a connected induced subgraph\footnote{The \emph{induced subgraph} of a collection $B$ of vertices is the graph with vertex set $B$ and edge set consisting of all edges in $G$ that happen to have both endpoints in $B$.}. We call $\mathcal{B}$ a \emph{bramble} if any two sets $S$ and $T$ in $\mathcal{B}$ ``touch'', meaning that either $S$ and $T$ have a vertex in common, or there exists an edge in $E(G)$ that connects a vertex in $S$ to a vertex in $T$.  See Figure \ref{figure:bramble} for an example on a $2\times 3$ grid graph, with the circled collections of vertices forming the sets that make up the bramble.

\begin{figure}[hbt]
    \centering
    \includegraphics{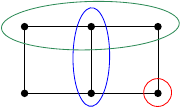}
    \caption{A bramble on a $2\times 3$ grid graph consisting of three sets of vertices}
    \label{figure:bramble}
\end{figure}

Given a bramble $\mathcal{B}$, we call a set of vertices $W$ a \emph{hitting set} of $\mathcal{B}$ if for every set of vertices in $\mathcal{B}$, each set contains at least one vertex from $W$. The smallest possible size of a hitting set is called the \emph{(bramble) order} of $\mathcal{B}$.  The bramble in Figure \ref{figure:bramble} has bramble order $2$, with the top middle vertex and the bottom right vertex forming a minimum hitting set.

Given a graph $G$, let $n$ be the largest possible bramble order on $G$. Then the \emph{treewidth} of $G$, written as $\tw(G)$, is $n-1$.  It's worth noting that this isn't the usual definition of treewidth; however, our definition is equivalent by work in \cite{st-graph-searching-treewidth}, and happens to be more useful for our purposes.  While treewidth has plenty of uses in graph theory, we care about treewidth because it is a lower bound on gonality.

\begin{theorem}[Theorem 3.1 in \cite{debruyn2014treewidth}]
For any graph $G$, $\tw(G) \leq \gon(G)$.
\end{theorem}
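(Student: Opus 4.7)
The plan is to prove the contrapositive: fix a bramble $\mathcal{B}$ of order $k$ in $G$, and show that no effective divisor of degree at most $k-2$ can win the Gonality Game. Applied to a bramble realizing the maximum order, this yields $\gon(G) \geq k-1 = \tw(G)$.

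First I would translate the Gonality Game condition into divisor-theoretic language: a divisor $D$ wins if and only if for every vertex $q$, some effective $D' \sim D$ satisfies $D'(q) \geq 1$. The natural tool for producing these $D'$ is the $q$-reduced form $D^q$ of $D$, the unique effective representative of the equivalence class in which no nonempty set $U \subseteq V(G) \setminus \{q\}$ can be set-fired while preserving effectivity---equivalently, the divisor on which Dhar's burning algorithm started from $q$ burns the whole graph. The winning condition then becomes $D^q(q) \geq 1$ for every vertex $q$.

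The heart of the argument is to manufacture a hitting set for $\mathcal{B}$ of size at most $\deg(D) + 1$ from a hypothetical winning $D$. For each $B \in \mathcal{B}$, pick a vertex $q_B \in B$ and take $D^{q_B}$, whose support has size at most $\deg(D)$ and contains $q_B \in B$. If one could arrange all these supports to sit inside a common set $H$ of size $\deg(D)+1$, then $H$ would hit every set of $\mathcal{B}$, forcing $\deg(D) + 1 \geq k$ by the definition of bramble order, and hence $\deg(D) \geq k-1$ as desired.

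The main obstacle is the support-control step: different choices of $q$ can produce very different $q$-reduced divisors, so the naive union of their supports is typically far too large. The key idea, due to van Dobben de Bruyn and Gijswijt, is to exploit the touching condition of brambles together with the rigid structure of set-firings between effective representatives: when $B$ and $B'$ touch, the firing scripts moving $D$ to $D^{q_B}$ and to $D^{q_{B'}}$ can be ``aligned'' so that their intermediate supports stay within a single controlled set. Making this alignment precise, and iterating carefully over all bramble sets to package the resulting data into one hitting set of the right size, is the delicate combinatorial core of the argument and is where I would expect to spend the most effort.
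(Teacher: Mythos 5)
This theorem is not proved in the paper at all --- it is imported from van Dobben de Bruyn and Gijswijt --- so your proposal has to be judged against the argument in that reference rather than against anything in this article. Your reduction is the correct one and does match theirs: by the Seymour--Thomas bramble characterization of treewidth, it suffices to show that if $D$ is an effective divisor of positive rank and degree $d$, then every bramble admits a hitting set of size at most $d+1$; and your translation of ``$D$ wins the Gonality Game'' into ``the $q$-reduced representative satisfies $D^q(q)\geq 1$ for every $q$'' is also right.

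The problem is the step you yourself flag as the ``delicate combinatorial core'': you never say how the hitting set is produced, and the mechanism you gesture at --- arranging the supports of all the reduced divisors $D^{q_B}$ to sit inside a common set $H$ of size $\deg(D)+1$ --- is not merely unproven but false in general. Already on a cycle $C_n$ with an effective divisor of degree $2$ (which always has rank $1$), the support of $D^q$ contains $q$, so as $q$ ranges over the vertices these supports sweep out all of $V(G)$; a bramble of order $3$ whose elements are spread around the cycle forces choices of $q_B$ whose reduced divisors cannot all be confined to a $3$-element set. The published proof does something genuinely different: it fixes a \emph{single} well-chosen effective representative $E$ of the class and shows that $\operatorname{supp}(E)$ together with \emph{one} additional vertex hits every bramble element, which already gives the bound $d+1$. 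Both the choice of $E$ and the extra vertex come from a structural lemma about passing between equivalent effective divisors: the firing script can be decomposed into a nested chain of set-firings $U_1\supseteq U_2\supseteq\cdots$ through effective divisors, and every vertex of $U_t$ with an edge leaving $U_t$ must carry at least as many chips as it has such edges at that stage, hence lies in the support of an intermediate divisor. Combining this with the touching condition is what pins down a single common vertex met by all bramble elements missed by $\operatorname{supp}(E)$. Without some version of this lemma and the accompanying extremal choice of $E$, your outline does not close; ``aligning firing scripts'' is exactly the content that still has to be supplied.
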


This actually gives an alternate way to lower bound the gonality of the tetrahedron!  For instance, for $K_4$, we can construct a bramble of order $4$ by just considering all $1$-element sets of vertices:  $\mathcal{B}=\{\{v_1\},\{v_2\},\{v_3\},\{v_4\}\}$. Every vertex touches every vertex, so this is indeed a bramble; and the smallest hitting set for this bramble consists of every single vertex (leave any out, and you'll miss one of the sets).  So the order of the bramble is $4$, meaning that tree-width of the graph is at least $3$, meaning the gonality is at least $3$. 

Let's now construct a bramble of order \(5\) on the octahedron.  Label the vertices \(u_1,u_2,v_1,v_2,w_1,w_2\), where a vertex is connected only to those vertices with a different letter label.  Then consider the bramble consisting of the following six collections of vertices:  \[\{u_1\}, \{v_1\}, \{w_1\}, \{u_2,v_2\}, \{u_2,w_2\},\{v_2,w_2\}.\]  Any hitting set will definitely need to have each of \(u_1\), \(v_1\), and \(w_1\) (that's true anytime you're trying to hit a set with only one element), as well as at least two of \(u_2,v_2,\) and \(w_2\) (if we only include one, the other two form an unhit set).  Thus any hitting set has at least five elements, and there does exist a hitting set with exactly five elements (for instance, every vertex except \(w_2\)), so the order of this bramble is \(5\).  That means that \(\tw(\octahedron)\geq 4\).  Combined with our upper bound of \(\gon(\octahedron)\leq 4\), we have
\[4\leq \tw(\octahedron)\leq \gon(\octahedron)\leq 4.\]
Since that string of inequalities starts and ends with the same number, we know that in fact \(\tw(\octahedron)=\gon(\octahedron)=4\).  So, we can use treewidth to successfully complete our computation of the gonality of the octahedron!

Like independence number, treewidth is hard to compute.  Luckily, there is an easy-to-compute number that relate nicely to treewidth giving us an easier (though in general weaker) lower bound on gonality.
 For any vertex \(v\) in \(G\), we call the number of edges incident to \(v\) the \emph{degree} of \(v\); and we let \(\delta(G)\) denote the minimum degree of a vertex of \(G\).  It turns out that for simple graphs \(G\), we have that \(\delta(G)\) is a lower bound on treewidth, giving us the string of inequalities
\[\delta(G)\leq \tw(G)\leq \gon(G).\]
So an even faster argument that the octahedron has gonality at least \(4\) is note that it's minimum degree is \(4\), providing us the desired lower bound!  There do exist graphs where treewidth is strictly larger than minimum degree, so in general we're better off using treewidth, but \(\delta(G)\) can be computed so quickly that it's often worth checking before delving into a lengthy search for brambles of high order.

\begin{exploringfurther}[\textbf{The gonality of complete multipartite graphs}]
Before we move on to our next Platonic solid, it's worth mentioning that just as the tetrahedron was one instance of complete graph, the octahedron is an instance of an oft-studied infinite family of graphs:  the \emph{complete multipartite graphs}.  Given integers \(n_1\leq n_2\leq\cdots\leq n_k\), the complete multipartite graph \(K_{n_1,n_2,\ldots,n_k}\) is the graph on \(n=n_1+n_2+\cdots +n_k\) vertices with \(n_1\) vertices in one cluster, \(n_2\) vertices in another cluster, and so on, such that two vertices are connected if and only if they're not in the same cluster. A few multipartite graphs are illustrated in Figure \ref{figure:complete_multipartite_graphs}.

\begin{figure}[hbt]
    \centering
    \includegraphics{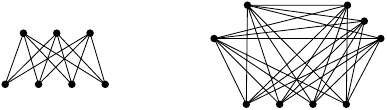}
    \caption{The complete multipartite graphs $K_{3,4}$ and $K_{2,3,4}$}
    \label{figure:complete_multipartite_graphs}
\end{figure}

Remember how the octahedron has six vertices, and every vertex is connected to every other vertex except one?  That means that the octahedron is precisely the complete multipartite graph \(K_{2,2,2}\). It's natural to ask then if our calculation of the gonality of the octahedron is a special case of a more general formula--and it turns out it is!  In fact, we have
\[\gon(K_{n_1,n_2,\ldots,n_k})=n-n_k.\]
So the gonalities of the graphs in Figure \ref{figure:complete_multipartite_graphs} are $3$ and $5$, respectively.  This formula was first proved in \cite{debruyn2014treewidth}, but before you look at their argument, see if you can prove it for yourself!  Try to generalize our independence number argument for an upper bound, and our treewidth argument for a lower bound.  (Or, stay tuned for another lower bound technique that we'll learn very shortly.)
\end{exploringfurther}

\section{The Cube}

Our next Platonic solid is perhaps the most famous:  the cube.  Although we can flatten the cube into a graph as we've done for our previous examples, we can also very nicely write down three-dimensional coordinates for its vertices:  they're all possible combinations of \(0\)'s and \(1\)'s.  The edges in turn have a very nice description as well--they connect two vertices precisely when their coordinates differ in exactly one spot!  This labelling scheme will be helpful for thinking about this graph.

Let's start by upper bounding the gonality of the cube.  One strategy we've already seen is to find an independent set of maximum possible size.  We can find an independent set of size \(4\) by picking all vertices whose coordinates sum to an even number--namely \((0,0,0)\), \((1,1,0)\), \((1,0,1)\), and  \((0,1,1)\).  It turns out that's the best we can do; try to argue that there aren't any independent sets with \(5\) or more vertices.  (Hint:  think about splitting the vertices of the cube into two parallel squares, and do a counting argument.)  That means we have \(\gon\left(\cube\right)\leq 8-4=4\).  

Before we move on to a lower bound, it's worth mentioning a different strategy that also shows the gonality of the cube is at most \(4\). It turns out that a cube is an example of a \emph{Cartesian product of graphs}.  Given two graphs \(G\) and \(H\), say where \(G\) has \(m\) vertices and \(H\) has \(n\) vertices, we can build a new graph called their Cartesian product \(G\square H\) with \(m\cdot n\) vertices arranged in an \(m\times n\) grid, such that the columns look like copies of \(G\) and the rows look like copies of \(H\).  You can see an example in Figure \ref{figure:product_of_graphs}, along with a few chip placements on \(G\square H\).

\begin{figure}[hbt]
    \centering
\includegraphics{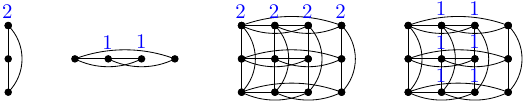}
    \caption{A graph $G$ on $3$ vertices, a graph $H$ on $4$ vertices, and two copies of the Cartesian product $G\square H$ on $12$ vertices.  Each divisor illustrated wins the Gonality Game on its graph.}
    \label{figure:product_of_graphs}
\end{figure}
Anytime a graph is the product of two other graphs, we have a certain upper bound on its gonality.

\begin{theorem}[Proposition 3 in \cite{aidun2019gonality}]
    We have \(\gon(G\square H)\leq |V(G)|\gon(H)\), as well as \(\gon(G\square H)\leq |V(H)|\gon(G)\).
\end{theorem}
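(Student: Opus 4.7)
The plan is to produce, given a winning divisor $D$ on $H$ with $\gon(H)$ chips, a winning divisor $D^\star$ on $G\square H$ with $|V(G)|\cdot \gon(H)$ chips; this immediately yields $\gon(G\square H) \leq |V(G)|\gon(H)$, and the other inequality follows by symmetry after swapping the roles of $G$ and $H$.

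The construction places a copy of $D$ on each ``row'' of $G\square H$: set $D^\star(g,h) = D(h)$ for every vertex $(g,h)$. This uses $|V(G)|\cdot \gon(H)$ chips, so the task reduces to showing that $D^\star$ wins the Gonality Game. Suppose Player B places a $-1$ chip at some vertex $(g_0,h_0)$. Because $D$ wins the Gonality Game on $H$, there is a firing script $\sigma$ on $V(H)$ (recording how many times each vertex fires) that carries the divisor $D$ with an added $-1$ at $h_0$ to an effective divisor $D'$ on $H$.

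The key step is to lift $\sigma$ to $G\square H$: fire each vertex $(g,h)$ exactly $\sigma(h)$ times, which amounts to set-firing the entire ``column'' $V(G)\times\{h\}$ a total of $\sigma(h)$ times for each $h \in V(H)$. A short Laplacian computation shows that the $G$-direction chips cancel out whenever a whole column fires together, since every edge of $G$ internal to the column sends a chip in each direction. The net effect on each row is therefore identical to performing $\sigma$ on a copy of $H$.

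The only thing left to verify is that no debt remains anywhere. In row $g_0$, the lifted firings take $D$ with a $-1$ at $h_0$ to $D'$, which is effective by construction. In any other row $g\neq g_0$, the same firings act on the starting divisor $D$ (no debt) and therefore produce $D'$ plus an extra chip at $h_0$, which is again effective. Thus Player A has eliminated all debt and won the Gonality Game, giving $\gon(G\square H)\leq |V(G)|\gon(H)$. I do not expect any real obstacle; the main subtlety is just the bookkeeping that set-firing a column in $G\square H$ simulates firing a single vertex in every row's copy of $H$ simultaneously.
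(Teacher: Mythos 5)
Your proposal is correct and follows essentially the same route as the paper: place the winning divisor for $H$ on every copy of $H$ in $G\square H$, and when Player B adds debt, perform the firing moves that work on one copy of $H$ simultaneously on all copies, so that the $G$-direction edges contribute nothing and each fiber ends up effective. Your version just makes the paper's sketch precise with the firing-script/Laplacian bookkeeping.
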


Here's the idea behind that theorem: pick a chip placement on \(H\) that wins the Gonality Game with as few chips as possible, i.e. with \(\gon(H)\) chips.  Place that divisor on every copy of \(H\) in \(G\square H\), so that we have a total of \(|V(G)|\gon(H)\) chips.  It turns out that divisor wins the Gonality Game on \(G\square H\): wherever Player B places debt, focus on that copy of \(H\), and think about what chip-firing moves you would do on \(H\) to eliminate the debt.  Then, perform those moves on \emph{all} copies of \(H\)!  This will eliminate the debt, without introducing any more.  A symmetric argument works if we swap \(G\) and \(H\).  These two chip placements are illustrated for our example graph in Figure \ref{figure:product_of_graphs}, giving us upper bounds of $8$ and $6$ on its gonality; of course, we'll always take the smaller of the two, so we know that graph has gonality at most $6$.

One way of building the cube graph is as follows.  Start with \(K_2\), the complete graph on two vertices.  Take the product \(K_2 \square K_2\).  This is a graph on four vertices that looks like a square or a cycle, often written as \(C_4\).  Then, take another product with \(K_2\), giving us \(C_4\square K_2\).  That, it turns out, is the cube graph.  Using our theorem on product graphs, this gives us \(|V(C_4)|\gon(K_2)=4\cdot 1=4\) and \(|V(K_2)|\gon(C_4)=2\cdot 2=4\) as upper bounds on the gonality of the cube, so we have an alternate argument that \(\gon(\cube)\leq 4\).  See Figure \ref{figure:two_gonality_divisors_cube} for two placements of four chips each that win the Gonality Game on the cube; the left comes from the independent set strategy, and the right comes from the Cartesian product strategy.

\begin{figure}[hbt]
    \centering
    \includegraphics[scale=0.6]{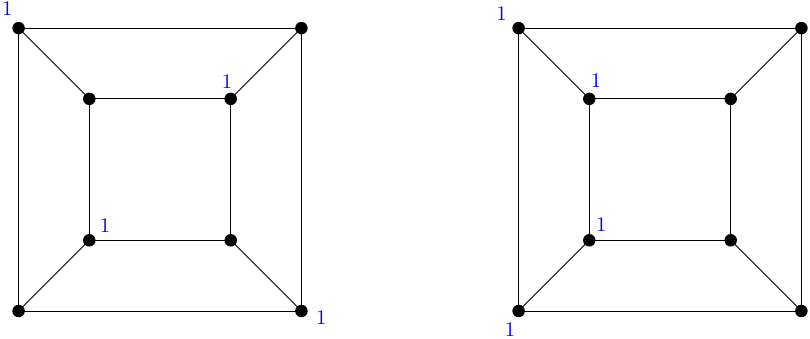}
    \caption{Two divisors that win the Gonality Game on the cube. The first places a chip on each vertex besides those in an independent set; and the second places a chip on each vertex in a copy of $C_4$, viewing the cube as $C_4\square K_2$}
    \label{figure:two_gonality_divisors_cube}
\end{figure}

Now let's turn to the lower bound.  It turns out that the treewidth of the cube is \(3\), so that's the best lower bound we can get on gonality using our strategy for the octahedron.  We could perhaps try a Dhar's-based argument (a Dhargument, if you will) like we did for the tetrahedron to argue that \(3\) chips cannot win the Gonality Game, but that ends up taking a while.  Instead, we're going to introduce a new invariant, called the \emph{scramble number} of a graph.

A \emph{scramble} on a graph is any collection of sets of vertices whose induced subgraphs are connected; in other words, it's a bramble where we've dropped the requirement that every pair of vertex sets ``touch'' one another.  So every bramble is a scramble, but not all scrambles are brambles.  Every set of vertices in a scramble is called an \emph{egg} (since you can't make a scramble without eggs!). See Figure \ref{figure:scrambles} for a few examples of scrambles--note that the eggs might overlap, or might be disjoint, and that the eggs might cover all vertices in the graph or they might not.

\begin{figure}[hbt]
    \centering
    \includegraphics{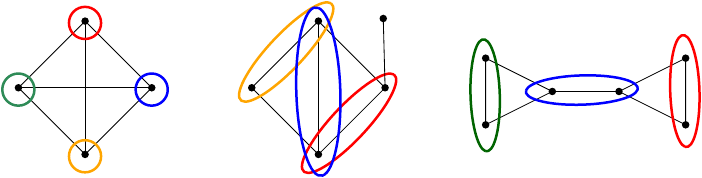}
    \caption{Three graphs, each with a scramble.  The first two scrambles are also brambles, but the third is not.}
    \label{figure:scrambles}
\end{figure}

Given a scramble \(\mathcal{S}\), there are two numbers we need to compute.  The first is the hitting number, \(h(\mathcal{S})\), which is familiar from our work with brambles:  it's the smallest number of vertices we can choose to so that we have at least one vertex in each egg in \(\mathcal{S}\).  The second number asks how easy it is to separate some pair of eggs in the scramble.  More formally, an \emph{egg-cut} for a scramble is a collection of edges that, if deleted, would split the graph into two pieces, each containing at least one complete egg.  The the egg-cut number of \(\mathcal{S}\), written \(e(\mathcal{S})\), is then the smallest size of an egg-cut\footnote{If every pair of eggs overlaps, then there don't exist any egg-cuts, in which case we take \(e(\mathcal{S})\) to be \(\infty\).}.  For instance, in the graphs from Figure \ref{figure:scrambles}, we have hitting numbers of $4$, $2$, and $3$, and egg-cut numbers of $3$, $3$, and $1$.  (Note that the egg-cut number might not be achieved by deleting all edges coming directly out of an egg; for instance, for the third scramble, a minimum egg-cut is achieved by deleting the middle edge to separate the green and red eggs. Thus to compute $e(\mathcal{S})$, it's not enough to look at each egg and see how many edges connect it to the rest of the graph.)

Once we have these two numbers for a scramble \(\mathcal{S}\), we can compute the \emph{(scramble) order}\footnote{The reason we sometimes need to specify ``bramble order'' and ``scramble order'' is that a bramble can be treated as a scramble, and so has two possible orders.} of \(\mathcal{S}\), which is defined to be the minimum of the hitting number and the egg-cut number:
\[||\mathcal{S}||=\min\{h(\mathcal{S}),e(\mathcal{S})\}.\]
So from left to right, the scrambles in Figure \ref{figure:scrambles} have orders $3$, $2$, and $1$.  Now we can finally define the scramble number of a graph \(G\), denoted \(\sn(G)\):  it's the maximum order of any scramble on \(G\).

Why do we like scramble number?  It turns out that it's an even better lower bound on gonality than treewidth!
\begin{theorem}[Theorem 1.1 in \cite{new_lower_bound}] For any graph \(G\), we have \(\tw(G)\leq \sn(G)\leq \gon(G)\).
\end{theorem}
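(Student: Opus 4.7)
I would prove the two inequalities $\tw(G) \leq \sn(G)$ and $\sn(G) \leq \gon(G)$ separately, with the second being the more substantive half. For $\tw(G) \leq \sn(G)$, I would invoke the bramble characterization of treewidth: $\tw(G) + 1$ equals the maximum bramble order. Take a bramble $\mathcal{B}$ of order $\tw(G) + 1$ and view it as a scramble, so its hitting number is $h(\mathcal{B}) = \tw(G) + 1$. Since the scramble order is $\min\{h(\mathcal{B}), e(\mathcal{B})\}$, the remaining task is to control the egg-cut number. A clean subcase: if every pair of eggs shares a vertex (a \emph{strong} bramble), then no egg-cut exists at all, so $e(\mathcal{B}) = \infty$ and $||\mathcal{B}|| = \tw(G) + 1 > \tw(G)$. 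In general pairs of eggs may touch only via edges, and small egg-cuts could drop the order; the fix is to either replace $\mathcal{B}$ with a strong bramble of comparable order by absorbing the connecting edges into eggs, or to enlarge $\mathcal{B}$ into a scramble whose egg-cut number is forced to be at least $\tw(G)$.

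For $\sn(G) \leq \gon(G)$, let $D$ be an effective divisor of degree $\gon(G)$ that wins the Gonality Game, so for each $q \in V(G)$ there is an effective $D_q \sim D$ with $D_q(q) \geq 1$. Given a scramble $\mathcal{S}$, I want $\deg(D) \geq ||\mathcal{S}||$. Assume for contradiction that $\deg(D) < h(\mathcal{S})$ and $\deg(D) < e(\mathcal{S})$. Since the support of $D$ has at most $\deg(D) < h(\mathcal{S})$ vertices, it misses some egg $E$. Picking $q \in E$, the divisor $D_q$ places a chip inside $E$, so the firing sequence from $D$ to $D_q$ must push at least one chip across the boundary of the initial chip-support into $E$.

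The key technical step is extracting an egg-cut of size at most $\deg(D)$ from this firing data. Letting $f \colon V(G) \to \Z_{\geq 0}$ count how many times each vertex fires in going from $D$ to $D_q$ (normalized so $\min f = 0$), I would examine the level sets $V_i = \{v : f(v) \geq i\}$: these form a nested chain and their edge boundaries $\partial V_i$ carry the chip-flow. A telescoping accounting bounds $\sum_i |\partial V_i|$ in terms of $\deg(D)$, and by iterating the construction over different choices of missed egg and applying a pigeonhole argument, I would locate some $\partial V_i$ that separates two complete eggs of $\mathcal{S}$ using at most $\deg(D) < e(\mathcal{S})$ edges, giving the required contradiction. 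The main obstacle is controlling which eggs sit on each side of the chosen cut; since the specific missed egg depends on the chosen effective representative of $D$, the whole equivalence class of $D$ has to enter the argument rather than any single representative, and the interplay between $h(\mathcal{S})$ and $e(\mathcal{S})$ must be carefully managed to ensure the level-set cut carves off an egg on each side.
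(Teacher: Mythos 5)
First, a point of comparison: the paper does not prove this theorem at all --- it is quoted wholesale from the cited reference, so there is no in-house argument for your proposal to match, and your sketch can only be judged on its own terms. Your treatment of the second inequality $\sn(G)\leq \gon(G)$ follows the strategy that actually works: take a positive-rank effective divisor $D$, note that if $\deg(D)<h(\mathcal{S})$ then some effective representative misses an egg, move a chip into that egg, and extract from the level sets of the firing function a cut of size at most $\deg(D)$ separating two eggs. The steps you flag as obstacles (showing each level-set boundary carries at most $\deg(D)$ edges, and arranging a complete egg on each side of the chosen cut) are exactly where the content lives and are not automatic, but the architecture is sound.

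The first inequality is where your proposal has a genuine gap. You correctly observe that a maximum-order bramble $\mathcal{B}$, viewed as a scramble, has $h(\mathcal{B})=\tw(G)+1$ but possibly a small egg-cut number, and you offer two fixes; neither works as stated. Absorbing connecting edges into eggs to force pairwise intersection can only make eggs easier to hit, so it can destroy the hitting number rather than preserve it: on $K_n$ the singleton bramble has order $n$, but adding a common vertex $v_1$ to every egg so that all pairs intersect produces a scramble hit by $\{v_1\}$ alone, of order $1$. And ``enlarge $\mathcal{B}$ into a scramble whose egg-cut number is forced to be at least $\tw(G)$'' is precisely the statement to be proved, not a fix. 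What is actually needed is a direct argument that for any bramble, every egg-cut has size at least $h(\mathcal{B})-1$: starting from a small edge cut whose two sides each contain a whole egg, one must manufacture a small hitting set, using the touching property to route every egg to an endpoint of a cut edge (with care, since an egg lying entirely on one side is only guaranteed a cut edge joining it to the distinguished egg on the other side). That is the one lemma your sketch is missing, and without it the inequality $\tw(G)\leq\sn(G)$ does not follow.
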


To see the power of scramble number in action, let's consider a scramble \(\mathcal{S}\) on our cube, illustrated in Figure \ref{figure:cube_scramble}.  Since no two eggs overlap, we have to pick a vertex from each to hit them all, so we immediately find \(h(\mathcal{S})=4\).  There definitely exist egg-cuts with four edges--for instance, delete all the edges coming out of an egg, or delete the four horizontal edges.  How can we be sure there isn't an egg-cut with three edges or fewer? 

\begin{figure}[hbt]
    \centering
    \includegraphics[scale=0.8]{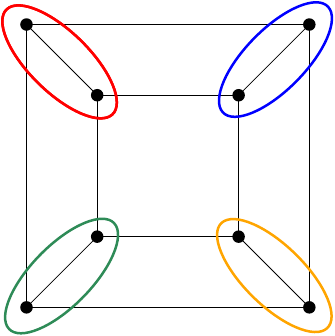}
    \caption{A scramble on the cube}
    \label{figure:cube_scramble}
\end{figure}

Well, pick any pair of our four eggs.  Let's consider four different ways to travel from the first to the second: 
 start on either the inner or outer vertex of the first egg, and travel either clockwise or counterclockwise until we reach a vertex of the second egg.  That gives a total of four different paths connecting one egg to the other, and these paths don't have any edges in common with one another.  In any egg-cut separating those two eggs, each path has to be broken; otherwise, the two eggs would be in the same component of the graph after we delete the edges.  That means the egg-cut must include at least one edge from each of the four paths; in other words, the egg-cut must have at least four edges in it!  That's true for any pair of eggs, so we have \(e(\mathcal{S})\geq 4\), and thus \(e(\mathcal{S})=4\).

 Where does that leave us?  Well, the order of this scramble is the minimum of the hitting number and the egg-cut number, which is \(4\):
 \[||\mathcal{S}||=\min\{h(\mathcal{S}),e(\mathcal{S})\}=\min\{4,4\}=4.\]
 The scramble number of the cube is the maximum order of any scramble, so we have that it's at least as large as the order of any given scramble:
 \[4=||\mathcal{S}||\leq \sn(G).\]
 Now we use the facts that \(\sn(G)\leq \gon(G)\) and \(\gon(G)\leq 4\) to get a string of inequalities:
 \[4\leq \sn(G)\leq \gon(G)\leq 4.\]
 Much like in our octahedron argument, this string of inequalities forces every number in between to be \(4\), so we have \(\sn(G)=\gon(G)=4\).  So, scramble number lets us compute the gonality of the cube!

\begin{exploringfurther}[\textbf{The gonality of the $n$-dimensional hypercube}]

 Before we leave our six-sided solid, it's worth asking how far we can push this.  The cube falls very naturally into a broader family of graphs, called the \emph{$d$-dimensional hypercubes} \(Q_d\).  One way to define \(Q_d\) is as having \(2^d\) vertices, one for each string of \(0\)'s and \(1\)'s of length \(d\), with two vertices connected by an edge precisely when their strings differ in exactly one digit.  So \(Q_1\) has two vertices joined by an edge; \(Q_2\) is a square; and \(Q_3=\cube\) is our cube.  Alternatively, once we have \(Q_d\), we can build \(Q_{d+1}\) by making two copies of \(Q_d\), and attaching the matching pairs of vertices (like how we can build a cube by drawing two copies of a square, and connecting matching vertices); graph theoretically, this is defining $Q_{d+1}=Q_{d}\square K_2$.

 The graph \(Q_1\) is a \emph{tree}, meaning it is connected but contains no cycles\footnote{A cycle is a sequence of two or more vertices in the graph, with only the first and last equal to each other, such that each consecutive pair of vertices is connected by an edge.}. It turns out that every tree has gonality equal to \(1\); see if you can argue that any single chip placed anyywhere on a tree will win the Gonality Game.  The graph \(Q_2\) is an example of a \emph{cycle}, meaning it consists of \(n\) vertices arranged in one big cycle.  All cycle graphs turn out to have gonality \(2\); see if you can prove this, using the tools we've build up so far!  That means that the gonalities of \(Q_1\), \(Q_2\), and \(Q_3\) are \(1\), \(2\), and \(4\).

 One possible pattern we might spot is that for \(d\leq 3\), we have \(\gon(Q_d)=2^{d-1}\); indeed, such a formula is conjectured to hold for all $d$ in \cite{debruyn2014treewidth}.  That number will always be an upper bound on \(\gon(Q_d)\); either argument we made for the upper bound on the cube (independence number or Cartesian product) gets us that.  So, when do we have equality?  In the same paper that introduced scramble number, the authors proved that \(\gon(Q_4)=8\), so the pattern continues\footnote{They actually studied \(Q_4\) in a slightly different guise: as a \emph{$4\times 4$ toroidal grid graph}.}. And in \cite{uniform_scrambles}, the authors proved that \(\gon(Q_5)=16\).  Both these arguments used scramble number, so we might hope we can just keep building scrambles on these graphs to prove that our pattern of \(2^{d-1}\) keeps on holding!

 Sadly, \cite{uniform_scrambles} also proved that for \(d\geq 6\), we have 
\(\sn(Q_d)<2^{d-1}\).  So there's still hope that \(\gon(Q_d)\) might be equal to \(2^{d-1}\), but we definitely won't be able to prove it using scramble number.  We'll probably need to introduce some new tools if we ever want to prove a result like that!
\end{exploringfurther}

\section{The Dodecahedron}

We've got two Platonic solids left:  the dodecahedron and the icosahedron.  Both of these will take more effort than the earlier solids, especially when it comes to lower bounding gonality.  Fortunately for the dodecahedron, we've already seen the main tool we'll need, namely scramble number; it'll just take a lot of work to implement it!

\begin{figure}[hbt]
    \centering
    \includegraphics[scale=0.7]{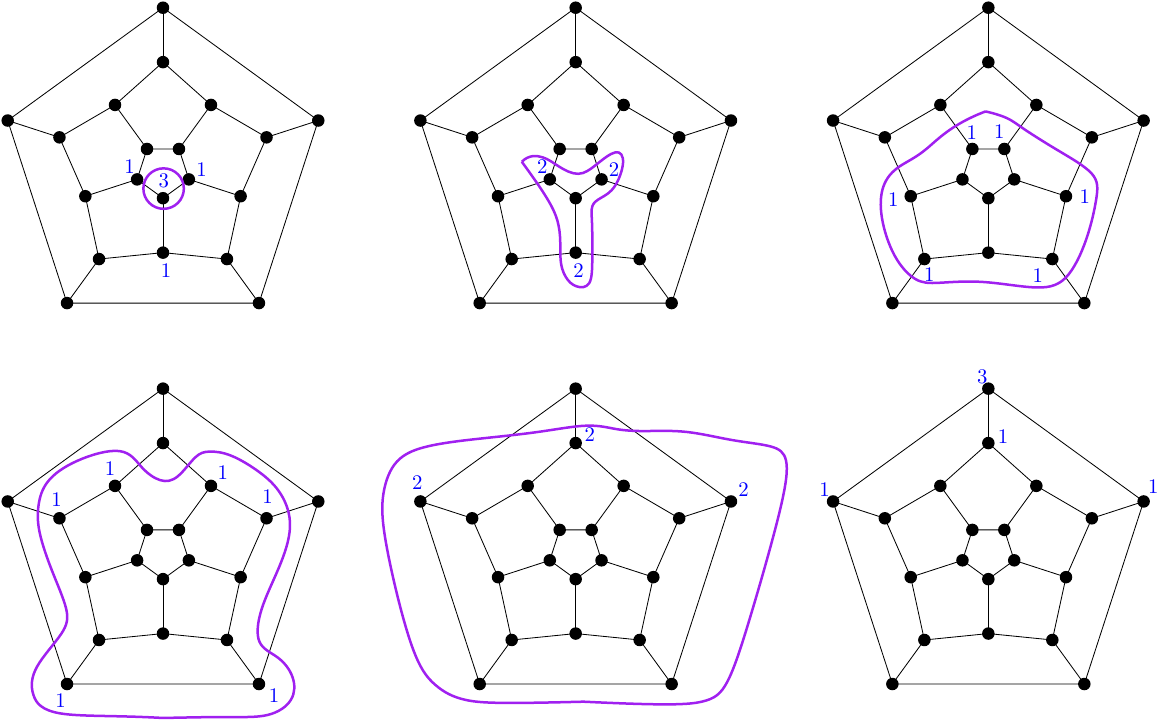}
    \caption{A chip placement that wins the Gonality Game on the dodecahedron.  To get from one divisor to the next, set-fire the circled collection of vertices.}
    \label{figure:gonality_winning_dodecahedron}
\end{figure}

Before we get into lower bounding for the dodecahedron's gonality, let's find an upper bound.  Place three chips on any vertex you like, and one chip of each of its neighbors; by the symmetry of the dodecahedron, it actually doesn't matter which vertex you start from!  Fire the first vertex, so that its three neighbors each have two chips.  Then fire the starting vertex together with its neighbors, spreading out the chips across six vertices.  From here, fire all the vertices fired so far--that will move the chips to six new distinct vertices.  Again firing all vertices we've fired, plus the newly chipped ones, gives us two chips on three vertices, all of which have a common neighbor; firing every vertex but that one puts chips onto it.  This process is illustrated in Figure \ref{figure:gonality_winning_dodecahedron}.  Since that placement of six chips could send a chip anywhere in the graph without introducing debt, no matter where Player B puts their $-1$ debt it can be eliminated. Thus this chip placement wins the Gonality Game, and \(\gon(\dodecahedron)\leq 6\).  



In order to show that \(\gon(\dodecahedron)\geq 6\), we will find a scramble \(\mathcal{S}\) on the graph and show that it has order at least \(6\). That gives a lower bound on \(\sn(\dodecahedron)\), which in turns gives  a lower bound on \(\gon(G)\), and since the upper and lower bounds match we'll be able to conclude \(\gon(\dodecahedron)= 6\).  (You might wonder whether treewidth would work here instead of scramble number--computationally at least, the answer is yes!  Use SageMath \cite{sagemath}, one can compute that $\tw(\dodecahedron)=6$, which implies $\gon(\dodecahedron)=6$.  However, we're shooting for a proof we can write down, and we don't know a nice description for a bramble of bramble order $7$ for us to work with.)

The scramble that ends up working is a pretty big one:  it's the \emph{$6$-uniform scramble on \(\dodecahedron\)}.  That's the scramble \(\mathcal{S}\) containing every possible egg on six vertices.  In other words, a collection of six vertices is an egg in \(\mathcal{S}\) if and only if they induce a connected subgraph of \(\dodecahedron\).  There are a lot of eggs in that scramble; for instance, there are \(12\) pentagons (cycles of length \(5\)) in \(\dodecahedron\), and each borders \(5\) other vertices, so there are \(12\cdot 5=60\) eggs of the form ``vertices of a pentagon plus one more vertex that neighbors the pentagon''.  There are plenty of other eggs too, which end up being trees.

For such a big scramble, it'll take a lot of work to compute its order.  We'll be a little more formal with our proofs here, to make sure we don't let anything slip through the cracks.  Let's start with the hitting number.  In the proof below, we'll focus on some different parts of the dodecahedron graph based on how we've drawn it:  the outer pentagon (the outermost five vertices), the inner pentagon (the innermost five vertices), and the middle $10$-cycle (the remaining $10$ vertices, which induce a cycle of length $10$).

\begin{lemma} The hitting number of the \(6\)-uniform scramble \(\mathcal{S}\) on \(\dodecahedron\) is at least \(6\):
\[h(\mathcal{S})\geq 6.\]
\end{lemma}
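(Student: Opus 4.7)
The plan is to prove the contrapositive: no set $W\subseteq V(\dodecahedron)$ with $|W|\leq 5$ hits $\mathcal{S}$. Setting $G=\dodecahedron$, an egg is exactly a connected induced subgraph on $6$ vertices, so $W$ fails to hit $\mathcal{S}$ precisely when $G[V\setminus W]$ has a component of size at least $6$. I argue by contradiction, assuming every component of $G[V\setminus W]$ has size at most $5$.

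A global edge count via the degree sum on $W$ gives
\[|E(G[V\setminus W])|=30-3|W|+|E(G[W])|\geq 30-3|W|.\]
On the other hand, since $G$ has girth $5$, any connected induced subgraph on $m\leq 5$ vertices has at most $m$ edges, with equality exactly when $m=5$ and the subgraph is an induced $5$-cycle. So under my assumption $|E(G[V\setminus W])|\leq|V\setminus W|=20-|W|$. Combining yields $|W|\geq 5$, and then $|W|=5$ forces equality throughout: $W$ is independent in $G$ and $G[V\setminus W]$ is the disjoint union of three induced $5$-cycles. Since $G$ has exactly $12$ five-cycles (its pentagonal faces), these three components are pairwise vertex-disjoint faces $F_1,F_2,F_3$.

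Let $e_{ij}$ count the edges of $G$ between $F_i$ and $F_j$. A second double-count---comparing the degree sum $3\cdot 5=15$ on $W$ to the $15$ outside-face edges leaving $F_1\cup F_2\cup F_3$ (each of the $15$ face-vertices has exactly one neighbor outside its face)---produces $|E(G[W])|=e_{12}+e_{13}+e_{23}$. Since $W$ is independent, this forces $e_{12}=e_{13}=e_{23}=0$: no edges at all run between the three faces.

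The main obstacle is ruling out this configuration. By face-transitivity of $G$, I may assume $F_1$ is the top face $T$ in the standard top/bottom decomposition, where the $12$ faces partition as $T$, a bottom face $B$, five top-sides $TS_1,\ldots,TS_5$, and five bottom-sides $BS_1,\ldots,BS_5$. Then $F_2$ and $F_3$ each lie in the set $\{B,BS_1,\ldots,BS_5\}$ of faces disjoint from $T$. Neither can be $B$, since $B$ shares two vertices with every $BS_k$; so $F_2=BS_i$ and $F_3=BS_j$ for some pair of vertex-disjoint bottom-sides. But $T_k$'s unique neighbor outside $T$ is $M_{2k-1}\in BS_k$, so the edge $T_iM_{2i-1}$ witnesses $e_{12}\geq 1$, contradicting $e_{12}=0$ and completing the proof.
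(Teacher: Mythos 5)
Your proof is correct, and it takes a genuinely different route from the paper's. The paper argues directly about how a hypothetical $5$-vertex hitting set must meet the pentagonal faces: it first shows every pentagon must be hit, then uses three pairwise disjoint pentagons and pigeonhole to force the exact structure of $W$, and finally exhibits an unhit egg. You instead run a global double count: combining the handshake identity $|E(G[V\setminus W])|=30-3|W|+|E(G[W])|$ with the girth-$5$ bound that a connected induced subgraph on $m\leq 5$ vertices has at most $m$ edges (equality only for an induced $5$-cycle), you force $|W|=5$, $W$ independent, and $G[V\setminus W]$ equal to three pairwise disjoint, pairwise non-adjacent induced $5$-cycles, which the facial structure of the dodecahedron then rules out. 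Your argument is more conceptual and explains \emph{why} five vertices cannot suffice, whereas the paper's is more hands-on and elementary; yours also generalizes more readily to other girth-$5$ cubic graphs. Two small points. First, your identification of the three components with faces rests on the assertion that the dodecahedron graph has exactly $12$ five-cycles; this is true and standard, but you use it without justification, and a fully self-contained write-up should include a line of proof (girth $5$ makes every $5$-cycle chordless, and a short outdegree or distance argument shows a non-facial $5$-cycle would create a cycle of length at most $4$ or disconnect the graph). Second, your second double count deriving $e_{12}=e_{13}=e_{23}=0$ is redundant: the three $5$-cycles are distinct components of $G[V\setminus W]$, so by definition no edges of $G$ run between them.
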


\begin{proof}
     Suppose for the sake of contradiction we do not have $h(\mathcal{S})\geq 6$, so \(h(\mathcal{S})\leq 5\).  Let \(W\) be a hitting set for $\mathcal{S}$ consisting of only \(5\) vertices.

    First, we'll argue that \(W\) has to hit every pentagon.  Suppose not!  Every pentagon looks the same, so we can go ahead and assume that the outer pentagon is missed.  Since $W$ hits every connected subgraph on $6$ vertices, it must hit every one of the five inner vertices bordering the outer pentagon; otherwise one of those vertices together with the pentagon would be an unhit egg.  That accounts for all five vertices in $W$, as illustrated on the left in Figure \ref{figure:hitting_set_dodecahedron}.  But, that means all vertices in the central pentagon are unhit, as are the five vertices bordering it, giving us at least $5$ eggs that are unhit, a contradiction.  So, $W$ must hit every pentagon.  
    
    \begin{figure}[hbt]
        \centering
        \includegraphics{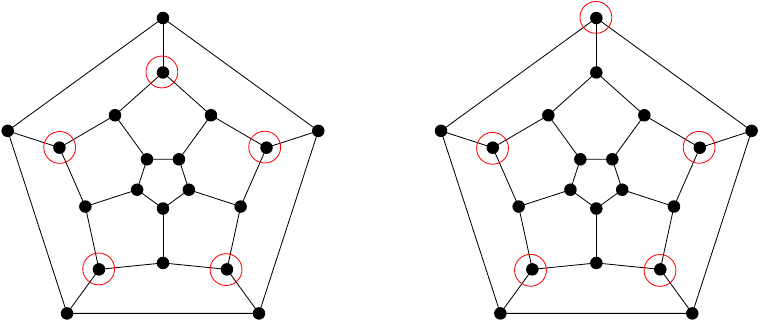}
        \caption{Two possible structures of a potential hitting set with $5$ vertices.  The first occurs is a the outer pentagon is unhit; the second occurs if the outer pentagon is hit exactly once.}
        \label{figure:hitting_set_dodecahedron}
    \end{figure}
    

    Now we consider the three pentagons illustrated in Figure \ref{figure:three_pentagons}, which have no overlapping vertices.  Since every pentagon is hit by $W$ and there are only five vertices in $W$, one of those three pentagons is hit exactly once by $W$; without loss of generality we assume it is the outer pentagon that is hit once, and that it is the topmost vertex that is hit.

    \begin{figure}[hbt]
        \centering
    \includegraphics[scale=0.8]{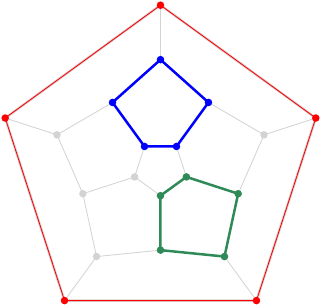}
        \caption{Three pentagons that share no vertices}
        \label{figure:three_pentagons}
    \end{figure}

    Consider any pair of adjacent vertices on the middle $10$-cycle not including the neighbor of the topmost vertex of the outer pentagon.  These two vertices, together with the four unhit vertices on the outer pentagon, form an egg, which must be hit on one of the $10$-cycle vertices.  This is equivalent to taking a path on $9$ vertices, and for each vertex either hitting it or hitting its neighbor.  This can be accomplished most efficiently by choosing $4$ vertices, alternating and away from the endpoint.  This completely determines $W$:  it consists of the topmost vertex, and then the four vertices on the middle $10$-cycle incident to another vertex on the outer pentagon, as illustrated on the right in Figure \ref{figure:hitting_set_dodecahedron}.  However, this still leaves the inner pentagon together with five adjacent vertices unit, so $W$ is not a hitting set.

    Having reached a contradiction, we conclude that $h(\mathcal{S})\geq 6$.
\end{proof}

All we have to show now is that \(e(\mathcal{S})\geq 6\).  Because of how we built our scramble, this is equivalent to the following claim:  if we delete edges and split the dodecahedron into two pieces, each connecting at least \(6\) vertices, then we must have deleted at least \(6\) edges.  There are only a few ways we could split the dodecahedron:  into \(6\) and \(14\) vertices, into \(7\) and \(13\), into \(8\) and \(12\), into \(9\) and \(11\), and into \(10\) and \(10\). Focusing on the smaller side, it makes sense to ask the question:  how many edges cpi;d contribute to the outdegree of a subgraph of \(\dodecahedron\) that has between \(6\)
and \(10\) vertices?  The number of edges of a subgraph \(H\) connecting it to the result of the graph is called the \emph{outdegree} of \(H\), written \(\outdeg(H)\).  So in order to prove \(e(\mathcal{S})\geq 6\), all we have to do is prove the following.

\begin{lemma}\label{lemma:subgraph_of_dodecahedron_outdegree}
    If \(H\) is a subgraph of \(\dodecahedron\) and \(6\leq |V(H)|\leq 10\), then \(\outdeg(H)\geq 6\).
\end{lemma}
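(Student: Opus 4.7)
The plan is to reduce the statement to an edge upper bound and split into two cases based on whether $V(H)$ contains a pentagonal face of $\dodecahedron$. Because $\dodecahedron$ is $3$-regular, for any induced subgraph $H$ we have
\[
\outdeg(H) = 3|V(H)| - 2|E(H)|,
\]
so, taking $H$ to be induced on its vertex set (which maximizes $|E(H)|$ and does not change $\outdeg(H)$), the claim reduces to showing $|E(H)| \leq \lfloor (3k-6)/2 \rfloor$ for $k = |V(H)| \in \{6,\dots,10\}$.

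The key observation is that the only $5$-cycles of $\dodecahedron$ are its twelve pentagonal faces. So if $V(H)$ contains no dodecahedron pentagon, then $H$ has girth at least $6$, and since $H$ is planar, Euler's formula combined with the girth bound gives $|E(H_i)| \leq 3(|V(H_i)|-2)/2$ on each cyclic component $H_i$. Summing across components (with an easy direct count handling acyclic components) yields $|E(H)| \leq (3|V(H)|-6)/2$, hence $\outdeg(H) \geq 6$, dispatching this case.

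The other case is that $V(H)$ contains $p \geq 1$ dodecahedron pentagons. Three structural facts drive the analysis: any two pentagons of $\dodecahedron$ share either $0$ or exactly $2$ (adjacent) vertices; exactly three pentagons meet at each vertex of $\dodecahedron$; and the pentagon-adjacency graph, which by duality is the icosahedron, contains no $K_4$ and has only face triangles. A short inclusion--exclusion argument then shows that $j$ pentagons together cover at least $5, 8, 10, 12$ vertices for $j=1,2,3,4$ respectively, so $p \leq 3$ throughout our range.

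For each $p \in \{1,2,3\}$ I would compute $|E(H)|$ by writing $V(H)$ as a union of pentagons plus ``extra'' vertices and counting edges within the pentagons, between the pentagons and the extras, and among the extras. Two secondary observations (both immediate from $\dodecahedron$ having girth $5$) tighten the count enough: the five vertices outside a pentagon $P$ that are adjacent to $P$ form an independent set, and no vertex outside the union of two adjacent pentagons has more than one neighbor inside that union. These force $|E(H)| \leq (3k-6)/2$ in each configuration, with the extremal equality $\outdeg(H)=6$ realized by a pentagon plus one attached vertex ($k=6$), two adjacent pentagons ($k=8$), and three pentagons meeting at a common vertex ($k=10$). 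The main obstacle is the case $p=2$, $k=9$: here the generic planar girth-$5$ bound is off by one edge, but the ``at most one neighbor into the pair'' observation forces a ninth vertex to contribute at most one edge to $|E(H)|$, closing the gap.
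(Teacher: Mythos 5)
Your proposal is correct in outline, but it takes a genuinely different route from the paper's. Both arguments begin with the identity $\outdeg(H)=3|V(H)|-2|E(H)|$ for an induced subgraph of a $3$-regular graph, and both ultimately rest on Euler's formula plus the pentagonal face structure; the difference is in what gets counted. The paper never splits on whether $V(H)$ contains a pentagon: it applies Euler's formula to $H$ itself, rewriting $\outdeg(H)=|V(H)|-2g+2$ where $g$ is the number of bounded faces of $H$, and bounds $g$ by noting that every bounded face of $H$ is a union of pentagonal faces of the dodecahedron, so $g\geq 2$ forces $|V(H)|\geq 8$ and $g\geq 3$ forces $|V(H)|\geq 10$; four one-line cases on $g\in\{0,1,2,3\}$ then finish the proof. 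You replace the face count $g$ with the number $p$ of pentagons fully contained in $V(H)$, handle $p=0$ uniformly via the planar girth-$6$ edge bound, and handle $p=1,2,3$ by configuration-by-configuration edge counting. What your route buys is a clean, self-contained pentagon-free case; what it costs is the nontrivial input that the twelve faces are the \emph{only} $5$-cycles of the dodecahedron (which the paper never needs), together with the enumeration of pentagon configurations, including the delicate $k=9$ subcase you rightly flag --- all of which the paper's face-count bound dispatches at once. One caution: your two ``secondary observations'' (the five neighbors of a pentagon form an independent set; a vertex outside two adjacent pentagons has at most one neighbor in their union) do not follow from girth $5$ alone --- killing the borderline configurations also requires that two faces share at most one edge (equivalently, that a $5$-cycle closed up through an outside vertex would itself have to be a face), and in one subcase that the dodecahedron has no $6$-cycles. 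These facts are true and provable, so this is a gap in justification rather than in the mathematics, but they should be stated and proved rather than attributed to girth alone.
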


Before proving this Lemma, we must recall one of the most famous results of graph theory.  A graph is called \emph{planar} if it can be drawn in a two-dimensional plane without any of its edges crossing.  For instance, all of our Platonic graphs are planar, as are any subgraphs.  It turns out for planar graphs that are connected, there is an universal relationship between the number of vertices, of edges, and of bounded faces in any planar drawing.

\begin{theorem}[Euler's Formula] Let $G$ be a connected planar graph, and assume we have a planar drawing of $G$ with $g$ bounded faces.  Then
\[g=|E(G)|-|V(G)|+1\]
\end{theorem}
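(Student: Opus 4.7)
The plan is to prove Euler's formula by induction on $|E(G)|$, with trees forming the base case. The intuition is that starting from a spanning tree of $G$ and adding edges one at a time, each new edge closes up exactly one new cycle and thereby cuts an existing face into two; reversing this picture lets us peel edges off cycles one at a time and track how $g$ changes.

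For the base case, suppose $G$ has no cycles. Then $G$ is a tree, and a standard argument (inductively remove a leaf) gives $|E(G)| = |V(G)| - 1$. Any planar drawing of a tree has no bounded face, because the boundary of a bounded region would have to trace out a closed walk, which in a graph with no cycles is impossible. Hence $g = 0$, and the formula reads $0 = (|V(G)|-1) - |V(G)| + 1$, which holds. For the inductive step, assume $G$ has at least one cycle $C$, and pick an edge $e$ lying on $C$. Let $G' = G - e$. Since $e$ lay on a cycle, $G'$ is still connected, and restricting the given drawing of $G$ to $G'$ gives a planar drawing of $G'$. The key geometric step is that erasing $e$ merges the two faces of $G$ immediately on either side of $e$ into a single face of $G'$, so the number of bounded faces drops by exactly one: $g' = g - 1$. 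Applying the inductive hypothesis to $G'$ gives
\[
g' = |E(G')| - |V(G')| + 1 = (|E(G)|-1) - |V(G)| + 1,
\]
and substituting $g = g' + 1$ yields $g = |E(G)| - |V(G)| + 1$, as required.

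The main obstacle is rigorously justifying that removing an edge on a cycle decreases the bounded face count by exactly one. This is really the Jordan Curve Theorem in disguise: the cycle $C$ is a simple closed curve in the plane, so it separates the plane into an ``inside'' and an ``outside'', and the edge $e$ is a common boundary segment of two distinct faces of $G$ (one on each side of $C$) which become one face in $G'$. Verifying that these two faces are genuinely distinct in $G$ but coincide after deleting $e$ is the subtle point; for a primer at this level it seems reasonable to appeal to the drawn picture, but a fully rigorous treatment would invoke the Jordan Curve Theorem explicitly.
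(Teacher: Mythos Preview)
The paper does not actually prove Euler's Formula; it is stated as a classical result (``one of the most famous results of graph theory'') and then used without proof. So there is nothing in the paper to compare against.

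Your argument is the standard textbook induction on $|E(G)|$, and it is correct. The only delicate point is exactly the one you flag yourself: that the two faces on either side of the edge $e$ are genuinely distinct, so that erasing $e$ merges two faces into one and decreases the bounded-face count by exactly one. This does rest on the Jordan Curve Theorem (or at least its polygonal version), and at the expository level of this paper your appeal to the picture is entirely in keeping with the surrounding text. One small refinement you could add for completeness: when the two merged faces consist of one bounded and one unbounded face, the result is unbounded, so $g$ still drops by one; and the case of both being unbounded cannot occur since there is only a single unbounded face and the two sides of $e$ are separated by the cycle $C$.
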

Normally this is phrased in terms of \emph{all} faces, including the unbounded on (leading to the famous formula $V-E+F=2$, where $V$ counts vertices, $E$ counts edges, and $F$ counts faces).  However, the number of bounded faces comes up in a foundational result on chip-firing games on graphs, which we'll explore at the end of this section.  In the meantime, let's think about how many bounded faces $H$ could have in our lemma.  Note that any bounded face in $H$ must be a bounded face in $G$, or made up of combined such faces.  All the bounded faces are pentagons, which share at most two vertices, with at most three bounded faces meeting at a vertex.  This means in order to have two bounded faces, $H$ needs at least $8$ vertices:  $5+5=10$ to form two pentagons, possibly minus $2$ if the pentagons share two vertices.  And in order to have three bounded faces, $H$ needs at least $10$ vertices by a similar argument.

We are now ready to prove the lemma.

\begin{proof}[Proof of Lemma \ref{lemma:subgraph_of_dodecahedron_outdegree}]  Throughout we can assume that $H$ is an induced subgraph (i.e., it contains all possible edges from $G$), as this does not change $\outdeg(H)$. 
Note that each vertex in $G$ is incident to $3$ vertices. So to count the number of edges leaving $H$, we can multiply $3$ by the number of vertices in $H$, and then subtract off any edges that stayed interior to $H$.  In fact, we have to subtract these edges off twice, since they were counted by both their endpoints in $H$. Thus the outdegree of any induced subgraph $H$ is
\[3|V(H)|-2|E(H)|.\]

Now let's consider $g=g(H)$, the number of bounded faces in $H$. As previously argued, $g$ is at most $1$ if $|V(H)|\leq 7$, at most $2$ if $|V(H)|\leq 9$, and at most $3$ if $|V(H)|=10$.  Since $g=|E(H)|-|V(H)|+1$, we have $|E(H)|=g+|V(H)|-1$, so
\[3|V(H)|-2|E(H)|=3|V(H)|-2g-2|V(H)|+2=|V(H)|-2g+2.\]
Let's lower bound this expression in four cases:
\begin{itemize}
    \item If $g=0$, this is $|V(H)|+2$.  This is at least $8$.
    \item If $g=1$, this is $|V(H)|$. This is at least $6$.
    \item If $g=2$, this is $|V(H)|-2$.  Since $|V(H)|\geq 8$ in this case, this is at least $6$.
    \item If $g=3$, this is $|V(H)|-4$.  Since $|V(H)|=10$, this is $6$.
\end{itemize}
In every case, we find that the outdegree of $H$ is at least $6$, as desired.
\end{proof}

We now know that our $6$-uniform scramble has an egg-cut number of (at least) $6$.  Combined with the hitting number result, this means that $\sn(\dodecahedron)\geq ||\mathcal{S}||\geq 6$.  This gives us the desired lower bound on $\gon(\dodecahedron)$, letting us conclude that
\[\gon(\dodecahedron)=6.\]

\begin{exploringfurther}[\textbf{The Riemann-Roch Theorem for Graphs}]

Before we venture on to our last Platonic solid, we return to the formula $g=|E(G)|-|V(G)|+1$.  Although this formula came from planar graphs, we can define the number $g(G)=|E(G)|-|V(G)|+1$ for any graph $G$.  This number has many interesting properties; for instance, it is the largest number of edges you can delete from a graph while still keeping it connected.  If you view your graph as a topological space, or perhaps thickening up your graph and viewing it as a topological surface, this number $g(G)$ represents the number of ``holes'' in that space.  It is perhaps not too surprising from this topological context that some authors \cite{bn} have taken to calling this number the \emph{genus} of a graph; however, most graph theory texts reserve the term ``genus'' for another graph invariant, so \emph{cyclomatic number} or \emph{first Betti number} are less controversial terms for $g(G)$.

The number $g(G)$ plays a role in a beautiful and powerful result used in studying chip-firing games on graphs, which we briefly present here.  First, the \emph{canonical divisor} $K$ on a graph $G$ is the chip placement that puts $\val(v)-2$ chips on each vertex $v$.  You can show, with a little work, that $\deg(K)=2g(G)-2$.   Next, the \emph{rank} of a divisor $D$, written $r(D)$, measures how much added debt the divisor could eliminate on the graph through chip-firing moves, regardless of where that debt is placed.  This means that $r(D)\geq 0$ if and only if the Dollar Game is winnable on $D$, and $r(D)\geq 1$ if and only if $D$ wins the Gonality Game.  More generally, $r(D)\geq k$ if and only if $D$ wins a generalization of the Gonality Game where Player B gets to place $-k$ chips, distributed however they'd like throughout the graph.  (By convention, if the Dollar Game is not winnable on $D$, we set $r(D)=-1$.)

Perhaps unsurprisingly, the rank of a divisor is very hard to compute.  However, there is a very easy-to-compute relationship between the rank of $D$ and the rank of $K-D$.

\begin{theorem}[The Riemann-Roch Theorem for Graphs, \cite{bn}]
    For any divisor $D$ on a graph $G$, we have
    \[r(D)-r(K-D)=\deg(D)+1-g(G).\]
\end{theorem}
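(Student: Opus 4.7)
The plan is to follow the classical Baker--Norine proof, which rests on two pillars: canonical representatives for divisor classes via Dhar's burning algorithm, and a family of ``bad'' divisors coming from acyclic orientations that exactly captures when the Dollar Game fails. As a first step I would develop the notion of a \emph{$q$-reduced divisor}: for a fixed vertex $q$, a divisor $D'$ is $q$-reduced if $D'(v)\geq 0$ for all $v\neq q$ and no nonempty set $S\subseteq V(G)\setminus\{q\}$ can be set-fired without creating debt on some vertex of $S$. Existence of a $q$-reduced representative in every divisor class follows from an iterative set-firing procedure, and uniqueness from the structure of the Laplacian lattice. The key consequence is the criterion $r(D)\geq 0$ if and only if the $q$-reduced representative of $D$ satisfies $D'(q)\geq 0$ --- precisely what Dhar's burning algorithm tests.

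Next I would introduce the divisors associated with acyclic orientations. Given an acyclic orientation $\mathcal{O}$ of $G$ with unique source $q$, define $D_{\mathcal{O}}(v)=\indeg_{\mathcal{O}}(v)-1$. A direct count gives
\[\deg(D_{\mathcal{O}})=|E(G)|-|V(G)|=g(G)-1,\]
and a short check shows $D_{\mathcal{O}}$ is $q$-reduced with $D_{\mathcal{O}}(q)=-1$, so $r(D_{\mathcal{O}})=-1$. The heart of the proof is then the \emph{dichotomy lemma}: every divisor $D$ of degree $g(G)-1$ satisfies \emph{exactly one} of (a)~$D$ is equivalent to some $D_{\mathcal{O}}$, in which case $r(D)=-1$, or (b)~$D$ is equivalent to an effective divisor, in which case $r(D)\geq 0$. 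Reversing the orientation yields an involution $\mathcal{O}\mapsto\bar{\mathcal{O}}$ with $D_{\mathcal{O}}+D_{\bar{\mathcal{O}}}=K$, giving a symmetry between $D$ and $K-D$ at degree $g(G)-1$.

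With these ingredients in place, Riemann--Roch follows by combining the dichotomy with this duality. For $D$ of degree exactly $g(G)-1$, the dichotomy and the involution immediately yield $r(D)=-1$ if and only if $r(K-D)\geq 0$, producing the identity $r(D)-r(K-D)=0=\deg(D)+1-g(G)$. For general $D$, the dichotomy further implies Riemann's inequality $r(D)\geq \deg(D)-g(G)$; applying it to both $D$ and $K-D$ and tracking how each rank changes when a single chip is added at a vertex (each rank moves by $0$ or $1$) forces the difference $r(D)-r(K-D)$ to equal $\deg(D)+1-g(G)$ via a squeeze argument. The hardest step is clearly the dichotomy lemma, particularly mutual exclusivity and surjectivity: showing that every degree-$(g(G)-1)$ divisor class contains \emph{either} an effective divisor \emph{or} some $D_{\mathcal{O}}$, but never both. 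Non-existence of an effective divisor equivalent to any $D_{\mathcal{O}}$ follows from $q$-reducedness, but the ``at least one'' direction requires a delicate bijective/counting argument relating divisor classes to acyclic orientations via the Laplacian --- the deep combinatorial content that mirrors the classical geometric statement about theta divisors on algebraic curves.
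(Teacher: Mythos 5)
The paper does not actually prove this theorem; it is quoted from Baker--Norine \cite{bn} inside an ``Exploring Further'' box, so there is no in-paper argument to compare yours against. Judged on its own terms, your outline is the standard Baker--Norine architecture and the ingredients you name are all correct: $q$-reduced representatives exist and are unique, $r(D)\geq 0$ is detected by the sign of the reduced divisor at $q$, the divisors $D_{\mathcal{O}}(v)=\indeg_{\mathcal{O}}(v)-1$ attached to acyclic orientations with unique source $q$ have degree $|E(G)|-|V(G)|=g(G)-1$, are $q$-reduced with value $-1$ at $q$, and satisfy $D_{\mathcal{O}}+D_{\bar{\mathcal{O}}}=K$. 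The dichotomy you flag as the hard step is indeed the heart of the matter (and the ``at least one'' direction is usually extracted from Dhar's burning algorithm itself: when the whole graph burns from $q$, the burning order orients the edges acyclically and shows the $q$-reduced divisor is dominated by some $D_{\mathcal{O}}$, with equality forced at degree $g(G)-1$).

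The genuine gap is in your last step, the passage from degree $g(G)-1$ to arbitrary degree. Knowing $r(D)-r(K-D)=0$ in degree $g(G)-1$, that adding a chip changes each rank by $0$ or $1$, and Riemann's inequality does not pin down the difference: writing $D=D_0+(v)$ with $\deg(D_0)=g(G)-1$, the quantity $r(D)-r(K-D)$ could a priori be $0$, $1$, or $2$, and ruling out $0$ and $2$ amounts to the unproved local statement that $r(D_0+(v))=r(D_0)+1$ if and only if $r(K-D_0-(v))=r(K-D_0)$ --- which is essentially Riemann--Roch again, so the squeeze does not close. The way Baker and Norine actually finish is to upgrade the dichotomy to an exact formula for the rank, namely
\[ r(D)=\Bigl(\min_{D'\sim D,\ \mathcal{O}}\deg^{+}\bigl(D'-D_{\mathcal{O}}\bigr)\Bigr)-1, \]
where $\deg^{+}$ sums the positive coefficients and $\mathcal{O}$ ranges over acyclic orientations with a unique source; Riemann--Roch then falls out of the identity $\deg^{+}(A)-\deg^{+}(-A)=\deg(A)$ applied with $A=D-D_{\mathcal{O}}$ together with the involution $\mathcal{O}\mapsto\bar{\mathcal{O}}$. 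You need this quantitative form of the dichotomy (or an equivalent device), not just the degree-$(g(G)-1)$ case plus a monotonicity argument.
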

This strongly parallels a classical result from Algebraic Geometry called the Riemann-Roch Theorem for Algebraic Curves.  It is remarkable that such a similar result holds for chip-firing games on graphs!

For a quick glimpse of how powerful this result is, suppose for a moment that $D$ is a divisor (possibly with debt) that has a total of $g(G)$ chips.  Then we have
\[r(D)=\deg(D)+1-g(G)+r(K-D)=g(G)+1-g(G)+r(K-D)=1+r(K-D).\]
No matter what divisor $K-D$ is, its rank must be at least $-1$, so $r(D)=1+r(K-D)\geq 0$.  So $D$ has nonnegative rank, meaning that the Dollar Game starting from $D$ must be winnable.  We get to conclude this just based on the degree of $D$, without knowing any further information!  With a little work, you can adapt this argument to show that if $D$ has at a total of $g+1$ chips, then $r(D)\geq 1$, meaning that $D$ wins the Gonality Game.  That means we have a completely universal upper bound on gonality:
\[\gon(G)\leq g(G)+1.\]
There is a conjectural upper bound on $\gon(G)$ that is stronger than this by around a factor of two:
\[\gon(G)\leq \frac{g(G)+3}{2};\]
however, whether this bound always holds is still an open question \cite{baker}.
\end{exploringfurther}

\section{Icosahedron}
Our final Platonic solid is the icosahedron.  This will turn out to be the only Platonic solid \emph{where scramble number is not powerful enough to compute gonality!}  But let's not get ahead of ourselves--let's see what upper and lower bounds we can come up with.

For an upper bound, we can compute the independence number of \(\icosahedron\) and use Theorem \ref{theorem:independence_number}.  It turns out that there do exist independent sets of size \(3\) on the icosahedron, and that no larger independent sets are possible (try to prove this!), so we have \(\gon(\icosahedron)\leq 12-3=9\).  You can check out an independent set of size \(3\), and a placement of $9$ chips that wins the Gonality Game, in Figure \ref{figure:icosahedron_independent_set_divisor}.

\begin{figure}[hbt]
    \centering
    \includegraphics[scale=0.8]{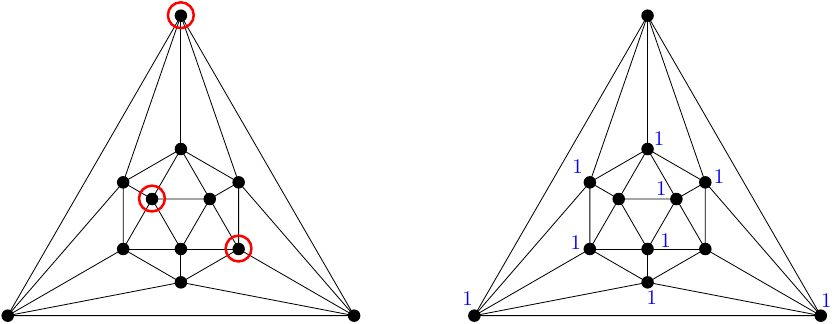}
    \caption{An independent set of size $3$ on the icosahedron, and a divisor of degree $9$ winning the Gonality Game}    \label{figure:icosahedron_independent_set_divisor}
\end{figure}

Now let's find a lower bound on gonality.  The following lemma will be a useful tool for a number of arguments we will make; the proof is a little long, so we'll save it for Appendix \ref{appendix:icosahedron_outdegrees}.

\begin{lemma}\label{lemma:icosahedron_outdegrees}
    Let $H$ be a subgraph of $\icosahedron$.  If $|V(H)|=2$ or $|V(H)|=10$, then $\outdeg(H)\geq 8$.  If $3\leq |V(H)|\leq 9$, then $\outdeg(H)\geq 9$.
\end{lemma}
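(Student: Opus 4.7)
The plan is to exploit two simple reductions and then handle only a short list of sub-cases. First, $\outdeg(H)$ depends only on $V(H)$, so we may assume $H$ is the induced subgraph on $V(H)$. Second, an edge of $\icosahedron$ has exactly one endpoint in $V(H)$ if and only if it has exactly one endpoint in $V(\icosahedron)\setminus V(H)$, so $\outdeg(H)=\outdeg(\icosahedron[V(\icosahedron)\setminus V(H)])$. Thus the claim for $|V(H)|=k$ is equivalent to the claim for $|V(H)|=12-k$, and it suffices to treat $|V(H)|\in\{2,3,4,5,6\}$. Since $\icosahedron$ is $5$-regular and $H$ is induced, $\outdeg(H)=5|V(H)|-2|E(H)|$, so the required inequalities translate to $|E(H)|\le 1,3,5,8,10$ for $|V(H)|=2,3,4,5,6$ respectively.

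The cases $|V(H)|=2,3$ are immediate. The key structural input for $|V(H)|\in\{4,5,6\}$ is that $\icosahedron$ is $K_4$-free: the open neighborhood of every vertex of $\icosahedron$ is a $5$-cycle, which is triangle-free, so no vertex has three pairwise-adjacent neighbors and hence no four vertices can form a $K_4$. For $|V(H)|=4$, this directly gives $|E(H)|\le 5$. For $|V(H)|=5$, any graph on $5$ vertices with $9$ or $10$ edges is $K_5$ or $K_5-e$, both of which contain a $K_4$, so $|E(H)|\le 8$.

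The main obstacle is $|V(H)|=6$. I would split on $\Delta(H)$, the maximum degree of $H$. If $\Delta(H)=5$, then some $v\in V(H)$ has all five of its icosahedron-neighbors in $V(H)$, forcing $V(H)=\{v\}\cup N(v)$; since $N(v)$ induces a $5$-cycle, this yields exactly $5+5=10$ edges. If $\Delta(H)\le 3$, then $2|E(H)|=\sum_v\deg_H(v)\le 18$, so $|E(H)|\le 9$. The remaining case is $\Delta(H)=4$: pick $v$ with $\deg_H(v)=4$ and write $V(H)=\{v,v_1,v_2,v_3,v_4,w\}$ with each $v_i\in N(v)$ and $w\notin N(v)\cup\{v\}$. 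Then $v$ contributes $4$ edges; the set $\{v_1,\dots,v_4\}$ is a $4$-vertex subset of the $5$-cycle $N(v)$, which induces a $P_4$ with exactly $3$ edges; and $w$ contributes at most $2$ edges to $\{v_1,\dots,v_4\}$ because $v$ and $w$ have exactly two common neighbors in $\icosahedron$ (a consequence of $\icosahedron$ being the strongly regular graph with parameters $(12,5,2,2)$). Thus $|E(H)|\le 4+3+2=9$ in this case, completing the $6$-vertex bound.

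The portion requiring the most care in the actual write-up is the verification of the two structural facts about $\icosahedron$ — namely $K_4$-freeness via the pentagonal neighborhood and the ``exactly two common neighbors'' count — but both can be confirmed directly from an explicit labeling of the graph (for instance as two pentagons joined by a middle $10$-cycle together with north and south poles).
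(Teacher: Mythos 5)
Your reductions (passing to the induced subgraph, the complement symmetry $\outdeg(H)=\outdeg(G-H)$, and the identity $\outdeg(H)=5|V(H)|-2|E(H)|$ for induced subgraphs of a $5$-regular graph) are exactly the ones the paper uses, and your treatment of $|V(H)|\le 4$ matches the paper's. You diverge in the two hard cases. For $|V(H)|=5$ the paper proves the sharper bound $|E(H)|\le 7$ by a degree-and-parity argument (ruling out a vertex adjacent to the other four, then showing every vertex would need valence $3$ in $H$, impossible on an odd number of vertices); you instead note that $9$ or $10$ edges on five vertices would force a $K_4$, giving only $|E(H)|\le 8$ --- a weaker bound, but exactly enough for $\outdeg(H)\ge 9$, and with less work. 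For $|V(H)|=6$ the paper also bounds the maximum degree and then reduces to its five-vertex parity analysis, whereas you split on $\Delta(H)$ and use the pentagonal neighborhood structure together with a common-neighbor count; both routes are sound, and yours has the advantage of not depending on the stronger five-vertex bound.

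One correction is needed in your $\Delta(H)=4$ case: the icosahedron is \emph{not} strongly regular with parameters $(12,5,2,2)$. Those parameters fail the feasibility condition $k(k-\lambda-1)=(n-k-1)\mu$ (here $10\neq 12$), and concretely an antipodal pair of vertices has \emph{zero} common neighbors, not two. What is true, and all your argument needs, is that a non-neighbor $w$ of $v$ has \emph{at most} two neighbors in $N(v)$: exactly two if $w$ is at distance $2$ from $v$, and zero if $w$ is the antipode of $v$. With the claim restated as an inequality, your count $|E(H)|\le 4+3+2=9$ stands and the proof is complete.
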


Let's consider a scramble $\mathcal{S}$ whose eggs each have two vertices, and let's include every such possible egg.  In other words, our scramble consists of all eggs $\{u,v\}$ where $u$ and $v$ are connected by an edge.  This is called the \emph{$2$-uniform scramble} in \cite{uniform_scrambles}.

\begin{proposition}
    We have $||\mathcal{S}||=8$.
\end{proposition}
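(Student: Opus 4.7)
The plan is to compute $h(\mathcal{S})$ and $e(\mathcal{S})$ separately and then take the minimum. Because every egg of $\mathcal{S}$ is a pair of adjacent vertices, a hitting set for $\mathcal{S}$ is exactly a vertex cover of $\icosahedron$ in the usual sense (a set meeting every edge). Vertex covers and independent sets are complementary ($W$ is a vertex cover if and only if $V(\icosahedron)\setminus W$ is an independent set), so the minimum vertex cover has size $|V(\icosahedron)| - \alpha(\icosahedron) = 12 - 3 = 9$, using the independence number recorded earlier in the section. Thus $h(\mathcal{S}) = 9$.

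For the egg-cut number I would reduce everything to Lemma~\ref{lemma:icosahedron_outdegrees}. An egg-cut for $\mathcal{S}$ corresponds to a vertex partition $V(\icosahedron) = A \sqcup B$ in which each of the induced subgraphs on $A$ and on $B$ contains at least one edge (so that each side contains a complete egg); in particular $|A|, |B| \geq 2$, giving $2 \leq |A| \leq 10$, and the number of edges cut equals $\outdeg(A)$. Lemma~\ref{lemma:icosahedron_outdegrees} then gives $\outdeg(A) \geq 8$ for every such $A$, so $e(\mathcal{S}) \geq 8$.

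For the matching upper bound I would exhibit an explicit egg-cut of size $8$: take $A = \{u,v\}$ where $uv$ is any edge of $\icosahedron$. Then $A$ contains the egg $\{u,v\}$, the side $B$ has $10$ vertices and certainly contains edges, and because each vertex of $\icosahedron$ has degree $5$ while the edge $uv$ is internal to $A$, the cut consists of exactly $2\cdot 5 - 2 = 8$ edges. Hence $e(\mathcal{S}) = 8$. Putting the two computations together gives
\[||\mathcal{S}|| = \min\{h(\mathcal{S}), e(\mathcal{S})\} = \min\{9, 8\} = 8,\]
as claimed.

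The main obstacle is really packaged inside Lemma~\ref{lemma:icosahedron_outdegrees}, whose proof the paper defers to Appendix~\ref{appendix:icosahedron_outdegrees}; once that lemma is granted, the remainder of the argument is just the duality between vertex covers and independent sets together with a one-line degree calculation, so this proposition is a clean application of the general machinery rather than a new combinatorial obstacle.
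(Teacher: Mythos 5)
Your proof is correct and follows essentially the same route as the paper's: the hitting number is computed via the complementarity of hitting sets for the $2$-uniform scramble and independent sets (the paper phrases this directly rather than through vertex covers, but it is the same argument), and the egg-cut number is obtained by combining Lemma~\ref{lemma:icosahedron_outdegrees} with the explicit cut isolating a single pair of adjacent vertices. No gaps.
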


\begin{proof}
    Let's star by computing $h(\mathcal{S})$.  First, pick an independent set of vertices of $V(G)$, and then let $W$ be the set of vertices in $V(G)$ that \emph{aren't} in that independent set.  Then, $W$ forms a hitting set: the vertices unhit by $W$ are independent and thus don't share edges, meaning that no egg is unhit.  Since $\alpha(\icosahedron)=3$, we can find a hitting set of size $9$.  In fact, for any hitting set of $\mathcal{S}$, the unhit vertices must form an independent set:  otherwise two unhit vertices would be adjacent, and would therefore form an unhit egg.  Since the largest independent sets have $3$ vertices, the smallest hitting sets have $9$ vertices.  Thus $h(\mathcal{S})=9$.

    For the egg-cut number, we are searching for the smallest number of edges to delete from the graph to split it into two pieces, each containing an edge.  These pieces must have between $2$ and $10$ vertices, and thus have outdegree at least $8$ by Lemma \ref{lemma:icosahedron_outdegrees}.  This means at least $8$ edges must be deleted for an egg-cut.  There do indeed exist egg-cuts with $8$ edges; for instance, choose two adjacent vertices, and delete all edges incident to each except for their shared edge.  Thus we have $e(\mathcal{S})=8$.

    In conclusion, we find
    \[||\mathcal{S}||=\min\{9,8\}=8.\]
\end{proof}

We now know $\gon(G)\leq 9$, and $\sn(G)\geq 8$.  From here we could either try to find a scramble of higher order, or a divisor of of degree $8$ that wins the Gonality Game.  We'll show the first is impossible, so we'll know there's no way to get a better lower bound using scramble number.  To do this we'll use the \emph{screewidth} of a graph; this is defined formally in \cite{screewidth-og}, though we'll present a quick definition here.

Say we're given a graph $G$.  Draw any tree $T$ (a connected graph with no cycles); we'll refer to its vertices as \emph{nodes} and edges as \emph{links}.  Draw the vertices of $G$ inside the nodes of $T$ (in any way you like), and then draw the edges of $G$ within $T$ in the simplest way possible, following the shortest path of nodes and links connecting the two vertices.  This drawing of $G$ within $T$ is called a \emph{tree-cut decomposition}.  You can see two examples of tree-cut decompositions of $\icosahedron$ in Figure \ref{figure:screewidth_icosahedron}.  The first has a tree with two nodes joined by a link; two adjacent vertices of the icosahedron are placed in one node, with the other $10$ vertices in the other node.  The second has a tree with three nodes, and can be obtained from the first tree-cut decomposition by ``pulling off'' another pair of adjacent vertices from the largest node.

\begin{figure}[hbt]
    \centering
    \includegraphics[scale=0.7]{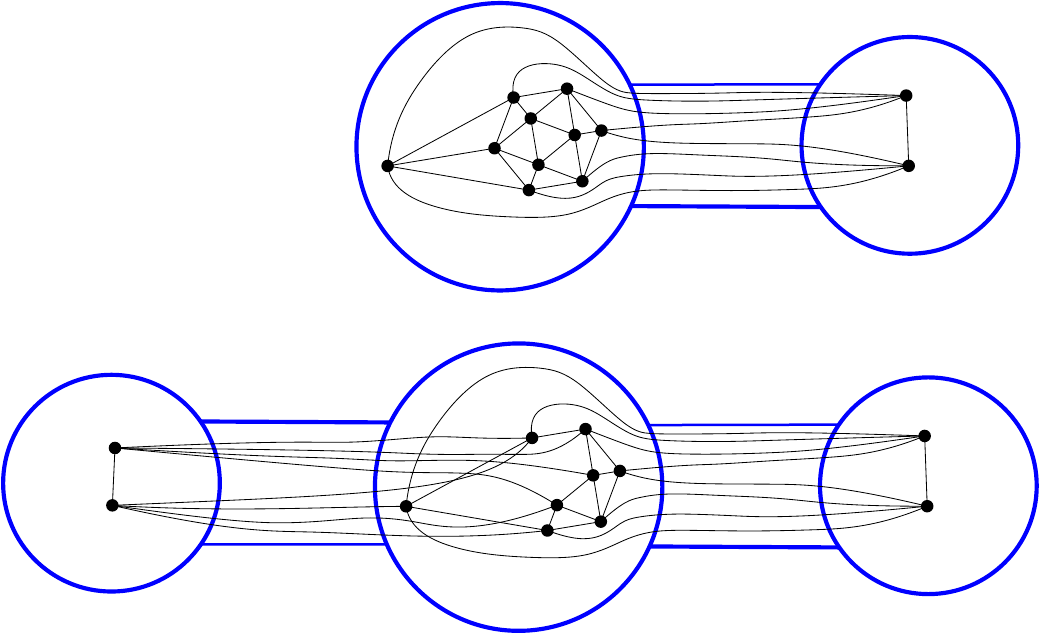}
    \caption{Two tree-cut decompositions of the icosahedron.  The first has width $10$, the second has width $8$.}
    \label{figure:screewidth_icosahedron}
\end{figure}

Once we have a tree-cut decomposition, we  compute a number called its \emph{width}.  For each link in $T$, count the number of $G$'s edges passing through it.  For each node in $T$, count the vertices from $G$ in it, and add the number of edges of $G$ that ``tunnel'' through the node, passing through it without having either endpoint in that node (the examples we've illustrated don't have any tunneling edges).  Among all these numbers (one for each link, and one for each node), take the \emph{maximum}.  This number is called the {width} of the tree-cut decomposition.  For our examples in Figure \ref{figure:screewidth_icosahedron}, you can check that the widths are $10$ and $8$, respectively.

Finally, the screewidth of a graph $G$, written $\scw(G)$, is the minimum width of any tree-cut decomposition of $G$.  From our examples, we know that $\scw(\icosahedron)\leq 8$.  Why is this useful information?  Well, the following theorem relates screewidth to scramble number.

\begin{theorem}[Theorem 1.1 in \cite{screewidth-og}] For any graph $G$, we have $\sn(G)\leq \scw(G)$
\end{theorem}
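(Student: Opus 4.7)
The plan is to fix any scramble $\mathcal{S}$ on $G$ and any tree-cut decomposition $(T,\{V_t\}_{t\in V(T)})$ of $G$, then show that the scramble order is at most the width $w$ of the decomposition; applying this when the decomposition achieves $\scw(G)$, and then taking a supremum over all scrambles, yields $\sn(G)\leq\scw(G)$. To translate the combinatorics of eggs into the tree, I would attach to each egg $X\in\mathcal{S}$ the minimal subtree $T_X\subseteq T$ spanning all nodes $t$ with $V_t\cap X\neq\emptyset$. These subtrees encode ``where each egg lives'' in $T$.

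The main dichotomy is whether some pair of egg-subtrees can be separated by a link. If there exist eggs $X,Y$ whose subtrees $T_X,T_Y$ are disjoint in $T$, then some link $e$ of $T$ separates them; cutting $e$ partitions $V(G)$ into two parts, one entirely containing $X$ and the other entirely containing $Y$. The $G$-edges crossing this partition therefore form an egg-cut, and their number is bounded by the width at the link $e$, which is at most $w$. This immediately gives $e(\mathcal{S})\leq w$, and hence $||\mathcal{S}||\leq w$. Otherwise every pair of egg-subtrees meets, and then the Helly property for subtrees of a tree supplies a node $t^{*}\in T$ common to every $T_X$. Here I would construct a candidate hitting set $W$ by taking all vertices in $V_{t^{*}}$ together with one endpoint (chosen arbitrarily) of each $G$-edge that tunnels through $t^{*}$; this has size at most $|V_{t^{*}}|+(\text{tunneling edges through }t^{*})\leq w$.

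The step I expect to be the main obstacle is verifying that $W$ indeed hits every egg, because tunneling edges do not a priori contribute vertices to any particular egg. Given $X$ with $t^{*}\in T_X$, there are two possibilities: either $X$ already has a vertex at $t^{*}$, in which case $V_{t^{*}}\cap X$ witnesses a hit; or $X$ has vertices in at least two components of the forest $T-t^{*}$. In the latter case, the connectedness of the induced subgraph $G[X]$ forces some $X$-edge to cross $t^{*}$ in the decomposition, and having no $X$-vertex at $t^{*}$ means this edge must tunnel through $t^{*}$. The key observation is that both endpoints of such a tunneling edge lie in $X$, so whichever endpoint we placed into $W$ automatically belongs to $X$, hitting the egg. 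Combining both halves of the dichotomy yields $||\mathcal{S}||=\min\{h(\mathcal{S}),e(\mathcal{S})\}\leq w$, and passing to a tree-cut decomposition achieving $\scw(G)$ and then to a supremum over scrambles finishes the argument.
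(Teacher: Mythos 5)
The paper states this result without proof, simply citing Theorem 1.1 of \cite{screewidth-og}, so there is no in-paper argument to compare against; your proposal is, as far as I can tell, a correct and complete proof, and it follows essentially the same strategy as the cited source. The two key mechanisms --- the dichotomy between a separating link (giving an egg-cut of size at most the width at that link) and a common node $t^{*}$ supplied by the Helly property for the pairwise-intersecting egg-subtrees, with one endpoint of each tunneling edge added to $V_{t^{*}}$ so that eggs meeting two components of $T-t^{*}$ are still hit --- are exactly right, and the step you flagged as the main obstacle is handled correctly: an egg avoiding $V_{t^{*}}$ but with $t^{*}$ in its Steiner tree must contain both endpoints of some edge tunneling through $t^{*}$.
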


We now know that $8\leq \sn(\icosahedron)\leq \scw(\icosahedron)\leq 8$, so in fact $\sn(\icosahedron)=8$.  So, we know we can't possibly find a better scramble!  It's important to emphasize that screewidth did not tell us anything directly about gonality; rather, it told us what the limits were on scramble number, so we didn't need to spend more time searching for a higher order scramble to better lower bound gonality.  However, an interesting open question is the following:  do we have $\scw(G)\leq \gon(G)$ for all graphs $G$?  The answer is yes for all known examples, and if it does hold in general, then screewidth would become the strongest known lower bound on gonality!

Now that we have $\sn(\icosahedron)=8$, we might try to find a divisor of degree $8$ that wins the Gonality Game on the icosahedron.  But try as we might, whether by hand or with a computer, we can find no such divisor.  So, we should try to argue that the gonality is indeed $9$.

Here will be our strategy:
\begin{itemize}
    \item Let $D$ be any debt-free divisor of degree $8$.
    \item Cleverly choose a vertex $q$ where $D$ doesn't place a chip, and place $-1$ debt there.
    \item  Run Dhar's burning algorithm from $q$, and argue that the whole graph burns.
\end{itemize}
This will show that $D$ cannot win the Gonality Game, since it cannot eliminate the debt on $q$. Since $D$ was an arbitrary debt-free divisor with $8$ chips, we'll know that we need at least $9$ chips to win the Gonality Game.

One flaw in this strategy is that there \emph{are} divisors of  degree $D$ such that the whole graph won't burn immediately, no matter how we choose $q$.  For instance, place $8$ chips on a single vertex $v$; choosing any other vertex as $q$ leads to the graph besides $v$ burning, but as $v$ remains unburned we chip-fire it and must start our burning process again.  So we should choose $D$ more cleverly, using the following result.

\begin{theorem}[A corollary of Theorem 1.3 in \cite{backman2017riemannroch}, discussed in \cite{aidun2019gonality}] \label{theorem:spread_chips_out}
 Let $D$ be a divisor with no debt on its vertices, with $\deg(D)\leq |E(G)|-|V(G)|$.  Then $D$ is equivalent to another divisor $D'$ with no debt such that
 \begin{itemize}
     \item for any vertex $v$, $D$ places at most $\val(v)-1$ chips on it; and
     \item for any two adjacent vertices $v$ and $w$, $D$ does not place both $\val(v)-1$ chips on $v$ and $\val(w)-1$ chips on $w$.
 \end{itemize}
\end{theorem}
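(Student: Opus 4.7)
My first move would be to recast the conditions on $D'$ symmetrically using the canonical divisor. Let $K$ be the divisor with $K(v) = \val(v) - 2$, and put $F := K - D'$. Then condition (i) becomes $F(v) \geq -1$ at every vertex, condition (ii) becomes the statement that $\{v : F(v) = -1\}$ is an independent set of $G$, and the debt-free condition $D' \geq 0$ becomes $F \leq K$. Chip-firing equivalence commutes with the involution $D' \mapsto K - D'$, so finding $D'$ as in the theorem is equivalent to finding $F \sim K - D$ satisfying these three inequalities. The hypothesis $\deg(D) \leq |E(G)| - |V(G)|$ translates to $\deg(F) \geq |E(G)| - |V(G)|$, which is exactly the right amount of slack.

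The heart of the argument is a connection to orientations. For any orientation $O$ of $G$, the divisor $D_O(v) := \indeg_O(v) - 1$ satisfies $\deg(D_O) = |E(G)| - |V(G)|$ and $D_O \geq -1$ automatically, with $D_O(v) = -1$ precisely at the sources of $O$. A useful observation is that sources of any orientation automatically form an independent set: if $v$ and $w$ were adjacent sources, then the edge between them would have to be oriented in \emph{some} direction, giving one of them indegree at least $1$, a contradiction. Thus conditions (i) and (ii) hold for \emph{any} orientation representative of $F$. The remaining condition $F \leq K$ translates to $O$ having no sinks (every vertex has outdegree at least $1$). I would then invoke the fact (a corollary of the results in \cite{backman2017riemannroch}) that every divisor class of degree $|E(G)| - |V(G)|$ contains a divisor of the form $D_O$, and push the argument a bit further via edge-reversals to ensure $O$ can be chosen without sinks.

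The main obstacle is twofold. First, when $\deg(F) > |E(G)| - |V(G)|$, one must distribute the surplus chips on top of $D_O$ without destroying the conditions: adding a chip at a source simply removes its $-1$ entry (which is fine), but piling too many chips on one vertex can push $F$ above $K$, violating $D' \geq 0$. Second, guaranteeing a sink-free orientation in the correct equivalence class requires $G$ to have sufficiently many cycles, and one must carefully check that the degree hypothesis supplies this. Threading these two requirements through the orientation framework is where the main technical work lies. As a backup, if the orientation route proves unwieldy, a direct extremal argument --- picking $D' \sim D$ debt-free that minimizes a suitable ``concentration'' functional $\Phi$, and showing that each forbidden configuration admits a chip-firing move (firing a single overloaded vertex, or set-firing an edge $\{v,w\}$ with both endpoints at $\val - 1$) that strictly decreases $\Phi$ --- should also succeed, though selecting a $\Phi$ that decreases under \emph{both} moves is itself a delicate task, since naive choices like $\sum_v D'(v)^2$ can increase when a fired vertex is surrounded by chip-heavy neighbors.
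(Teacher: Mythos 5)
This theorem is not proved in the paper at all: it is quoted as a corollary of Theorem 1.3 of \cite{backman2017riemannroch}, as discussed in \cite{aidun2019gonality}, so there is no in-paper argument to compare against. That said, your orientation framework is exactly the right one --- it is what the cited theorem is about --- and your translation via $F = K - D'$ is correct in every detail (condition (i) becomes $F \geq -1$, condition (ii) becomes independence of $\{F = -1\}$, effectivity becomes $F \leq K$, i.e.\ sink-freeness of an orientation representative). A small simplification worth noting: you can skip the $K-D$ detour entirely by seeking $D' = D_{O}$ with $D_O(v) = \indeg_O(v)-1$ for a \emph{source-free} orientation $O$; then $D_O(v) \leq \val(v)-1$ holds automatically with equality exactly at sinks, sinks form an independent set for the same one-line reason as sources, and the only thing left to arrange is effectivity, i.e.\ source-freeness.

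The genuine gaps are the two obstacles you name but do not resolve, and they are not minor. First, your sketch never uses the hypothesis that $D$ is debt-free, yet without it the conclusion is false: on the triangle $C_3$ there are three divisor classes of degree $g-1 = 0$, and only the trivial one admits a sink-free (equivalently source-free) orientation representative. So ``push the argument a bit further via edge-reversals'' cannot work in general; the existence of the sink-free representative \emph{is} the content of Backman's theorem, and it is precisely where effectivity of $D$ must enter (via his oriented Dhar/unfurling algorithm, in which the class-preserving moves are reversals of directed cycles and of directed cuts, the latter being chip-firings). Second, when $\deg(D) < |E(G)|-|V(G)|$ you cannot simply ``distribute the surplus on top of $D_O$'': the class of $F$ is fixed, so you must show it contains a divisor of the form $D_O + E$ with $E$ effective and the constraints intact, which requires the partial-orientation version of the machinery rather than a free choice of where to park extra chips. (The naive fix --- pad $D$ up to degree $g-1$, apply the exact-degree case, then subtract the padding --- fails because subtracting can destroy effectivity.) Finally, your backup plan of minimizing a concentration functional $\Phi$ over effective representatives makes no use of the degree bound either; since the statement is clearly false for divisors of large degree, no such monovariant argument can close without locating where $\deg(D) \leq |E(G)|-|V(G)|$ forces termination.
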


Intuitively this says that as long as we don't have too many chips on our graph, we can ``spread them out'' pretty well.  This makes the whole graph much more susceptible to burning in Dhar's burning algorithm, as there is a limit to the size of piles of chips to defend against fires.

We're ready to prove the following.

\begin{theorem}
    The gonality of the icosahedron graph is $9$.
\end{theorem}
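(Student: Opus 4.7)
The upper bound $\gon(\icosahedron)\leq 9$ is already in hand from the independence-number argument, so the plan is to prove the matching lower bound $\gon(\icosahedron)\geq 9$ by contradiction. I would suppose some debt-free divisor $D$ of degree $8$ wins the Gonality Game. Since $\deg(D)=8\leq 30-12=|E(\icosahedron)|-|V(\icosahedron)|$, Theorem \ref{theorem:spread_chips_out} lets me replace $D$ by an equivalent spread-out divisor $D'$ in which every vertex holds at most $4$ chips and no two adjacent vertices both hold $4$. It then suffices to find a vertex $q$ with $D'(q)=0$ such that, after placing $-1$ on $q$, a single iteration of Dhar's burning algorithm burns the entire graph: the Dollar Game is then unwinnable, contradicting $D$ being a winning divisor.

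The heart of the argument is a case analysis on the size of the unburned set $U$ produced by Dhar's. Sizes $1$ and $2$ are quickly ruled out: the former needs a vertex with at least $5$ chips, and the latter, combined with the $|V(H)|=2$ case of Lemma \ref{lemma:icosahedron_outdegrees}, forces two adjacent vertices each to hold $4$ chips, violating the spread condition. For $3\leq |U|\leq 9$, the at-most-$8$ chips in $U$ cannot defend against the $\outdeg(U)\geq 9$ burning edges from Lemma \ref{lemma:icosahedron_outdegrees}, so such $U$ cannot be stable. That leaves only $|U|=11$, which happens exactly when every neighbor of $q$ is chipped, and $|U|=10$, which I would show forces a rigid \emph{bad pair} configuration: the two burned vertices are adjacent and both unchipped, their two common neighbors each hold $2$ chips, their four remaining neighbors each hold $1$ chip, and the four ``far'' vertices (adjacent to neither) are unchipped. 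A short degree calculation confirms that whenever $|U|\in\{10,11\}$ does occur, firing $U$ brings $q$ out of debt and hands Player A the Dollar Game, so I must avoid both configurations.

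The main obstacle is selecting $q$ to dodge both bad cases. Since $D'$ has at most $8$ chipped vertices, it has at least $4$ unchipped; and since $\alpha(\icosahedron)=3$, at least two unchipped vertices are adjacent, furnishing candidates $q$ with an unchipped neighbor. To rule out the bad pair, I would observe that its two $2$-chip vertices must be the two common neighbors of the burned pair, and that in the icosahedron any pair of vertices at distance two has exactly two common neighbors; so the bad pair, if it exists, is forced to equal $N(a)\cap N(b)$ for the unique pair $\{a,b\}$ of $2$-chip vertices in $D'$, and $D'$ admits at most one bad pair $\{q_0,r_0\}$. The four far vertices of $\{q_0,r_0\}$ then induce a copy of $K_4$ minus an edge in $\icosahedron$, so each far vertex has at least two unchipped (far) neighbors; taking $q$ to be any such far vertex simultaneously secures an unchipped neighbor and avoids the unique bad pair. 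With this $q$ in hand, the case analysis forces $|U|=0$, completing the contradiction and yielding $\gon(\icosahedron)=9$.
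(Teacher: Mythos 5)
Your proposal is correct and follows essentially the same route as the paper: normalize $D$ via Theorem \ref{theorem:spread_chips_out}, pick an unchipped $q$ with an unchipped neighbor (possible since $\deg(D)=8$ leaves at least four unchipped vertices and $\alpha(\icosahedron)=3$), run Dhar's burning algorithm, and use Lemma \ref{lemma:icosahedron_outdegrees} to eliminate every intermediate size of the burned/unburned set. The single point of divergence is the exceptional case where the fire stops after burning only $q$ and its unchipped neighbor: the paper keeps its original $q$, identifies the forced rigid divisor, and then reruns the burn from a second vertex $q'$ from which the whole graph burns, whereas you show the rigid configuration admits at most one possible bad pair (the common neighborhood of the unique pair of $2$-chip vertices, which are at distance two and so have exactly two common neighbors) and then choose $q$ among the four far vertices, which induce $K_4$ minus an edge and hence furnish an unchipped $q$ with an unchipped neighbor avoiding that pair. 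Both resolutions are sound; yours front-loads the work into the initial choice of $q$ (at the cost of the uniqueness and far-vertex structure checks), while the paper's defers it to a second application of Dhar's algorithm on the explicitly determined divisor.
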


\begin{proof}  We have already seen that $\gon(\icosahedron)\leq 9$.

For our icosahedron graph we have $|E(\icosahedron)|-|V(\icosahedron)|=30-12=18$, so certainly a debt-free divisor of degree $8$ satisfies the the assumptions of Theorem \ref{theorem:spread_chips_out}.  Let $D$ be any debt-free divisor of degree $8$ on $\icosahedron$; perhaps replacing it with an equivalent debt-free divisor, we can assume that $D$ places at most $4$ chips on any vertex, and that it does not place $4$ chips on each of a pair of adjacent vertices.  We wish to show that $D$ does \emph{not} win the Gonality Game.
 
The next step is to carefully choose a vertex $q$ to run Dhar's burning algorithm from.  Since $D$ has degree $8$, at least four vertices are unchipped; because $\alpha(\icosahedron)=3$, these four vertices cannot form an independent set, so choose $q$ with no chips so that it has at least one neighbor with no chips.  Place $-1$ debt on $q$, and run Dhar's burning algorithm. If the whole graph burns, then $D$ cannot eliminate the debt on $q$ and we are done, so we only have to deal with the possibility that the burning process finishes with a set $B$ of burned vertices with $|B|\leq 11$.  We will argue that the burning process cannot stop at any such point, or that if it can then some other vertex $q'$ can demonstrate that $D$ fails to win the Gonality Game.

We cannot have $|B|=1$, since $q$ and its unchipped neighbor both burn.  It is possible for $|B|=2$, although only barely:  since $q$ and its unchipped neighbor together have eight edges connected them to other vertices, the eight chips of $D$ would have to be placed on the other endpoints of those edges to contain the fire.  However, in this case we know exactly what our divisor $D$ is: it would have to be as illustrated in Figure \ref{figure:two_burn_stop_icosahedron}, where $v$ is the unchipped neighbor of $q$. (By the symmetry of the graph, it does not matter which pair of adjacent vertices are $q$ and $v$.)  If our divisor $D$ has this form, place  $-1$ debt on the vertex $q'$ as illustrated, rather than on $q$. Note that the whole graph will burn running Dhar's burning algorithm from $q'$, meaning $D$ does not win the Gonality Game.

\begin{figure}[hbt]
    \centering
    \includegraphics{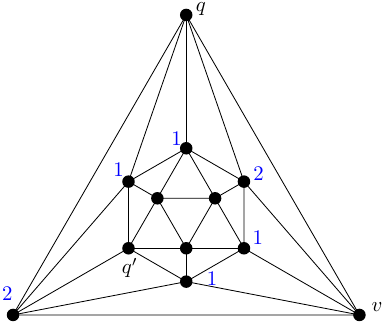}
    \caption{The required form of $D$ if the fire from $q$ burns only $q$ and $v$.  Although $D$ can eliminate $-1$ debt added to $q$, it cannot eliminate $-1$ debt added to $q'$.}
    \label{figure:two_burn_stop_icosahedron}
\end{figure}

We are left to rule out the possibility that $|B|\geq 3$.  If $3\leq |B|\leq 9$, then by Lemma \ref{lemma:icosahedron_outdegrees} there are $9$ (or more) edges connecting the vertices of $U$ to the rest of the graph; but to contain the fire would require $9$ (or more) chips, so the fire cannot stabilize.  If $|B|=10$, let $u$ and $u'$ be the unburned vertices.  If $u$ and $u'$ are not adjacent, then each has $5$ burning edges and at most $4$ chips, and so will burn.  If they are adjacent, then each has $4$ burning edges.  One has at most $3$ chips and so will burn, at which point the other has $5$ burning edges and at most $4$ chips and will burn as well. Finally, if $|B|=11$, let $u$ be the unburned vertex.  It has at most $4$ chips, but $5$ burning edges, and so burns as well.

Thus we find that the whole graph burns when running Dhar's burning algorithm from some unchipped vertex (either $q$, or $q'$ in the case that the fire from $q$ stabilized early), showing that $D$ does not win the Gonality Game.  We can therefore conclude that $\gon(\icosahedron)\geq 9$, finishing the proof.
\end{proof}

We've now successfully computed the gonality of our five Platonic graphs!  The results of these computations, along with the strategies we used, are summarized in Table \ref{table:summary}.

\begin{table}[hbt]
\begin{tabular}{|c|c|c|c|}
\hline
\textbf{Graph name} & \textbf{Gonality} & \textbf{Lower bound technique(s)} & \textbf{Upper bound technique(s)} \\ \hline
Tetrahedron & $3$ & Dhargument, $\tw(G)$, $\delta(G)$ & Independence number \\ \hline
Octahedron & $4$ & $\tw(G)$, $\delta(G)$ & Independence number \\ \hline
Cube & $4$ & Scramble number & \begin{tabular}[c]{@{}c@{}}Independence number,\\ Cartesian product\end{tabular} \\ \hline
Dodecahedron & $6$ & Scramble number, $\tw(G)$ & Custom divisor \\ \hline
Icosahedron & $9$ & Dhargument & Independence number \\ \hline
\end{tabular}
\caption{A summary of our results and techniques}
\label{table:summary}
\end{table}

\begin{exploringfurther}[\textbf{The Archimedean solids and the Platonic solids in higher dimensions}]
Now that we've handled the Platonic solid graphs, I'm sure you're excited for more graphs to compute the gonalites of!  There are all sorts of graph families out there with unknown graph gonalities, but one family of similar flavor could be the graphs coming from the \emph{Archimedean solids}.  These are a slight generalization from the Platonic solids, where every face must still be a regular polygon and every vertex must look the same, but now more than one regular polygon is used.  One of the most famous such solids is the \emph{truncated icosahedron}, or \emph{soccer ball}, pictured in Figure \ref{figure:truncated_icosahedron} along with its graph.  Its faces are regular hexagons and pentagons, and it is obtained by cutting off the twenty vertices of an icosahedron. 

\begin{figure}[hbt]
    \centering
    \includegraphics[scale=0.4]{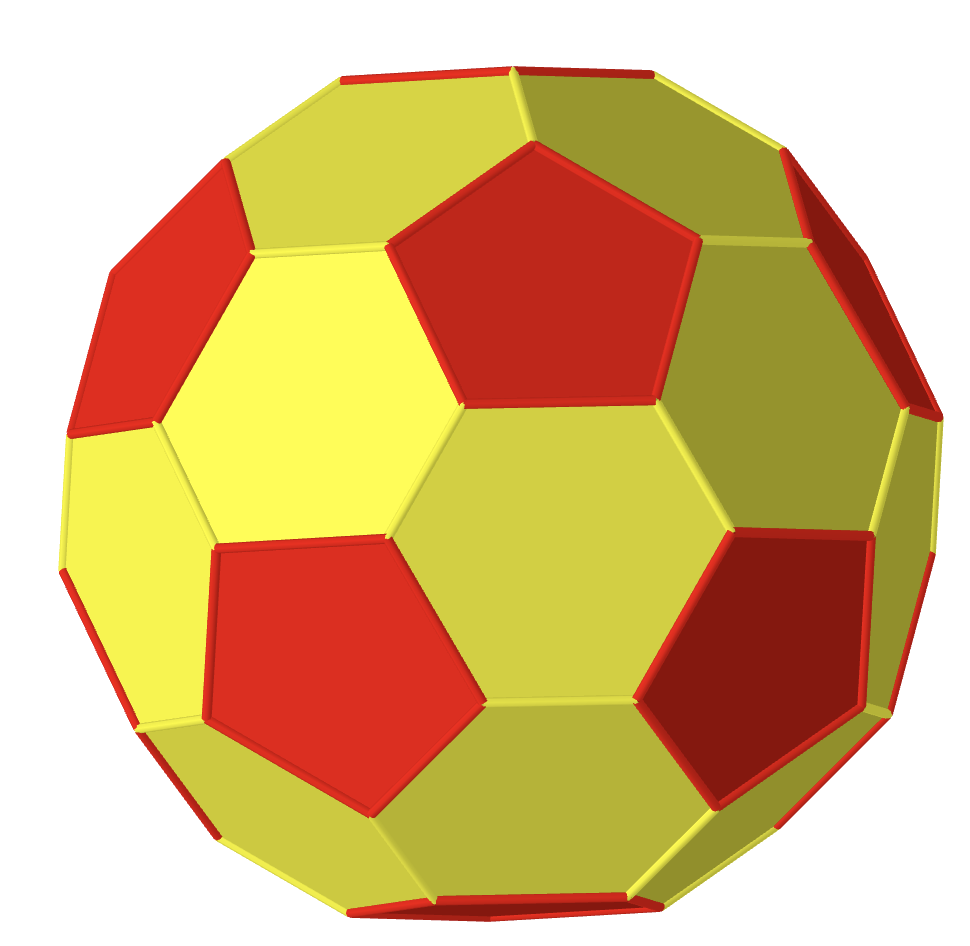}\quad\quad\includegraphics[scale=0.45]{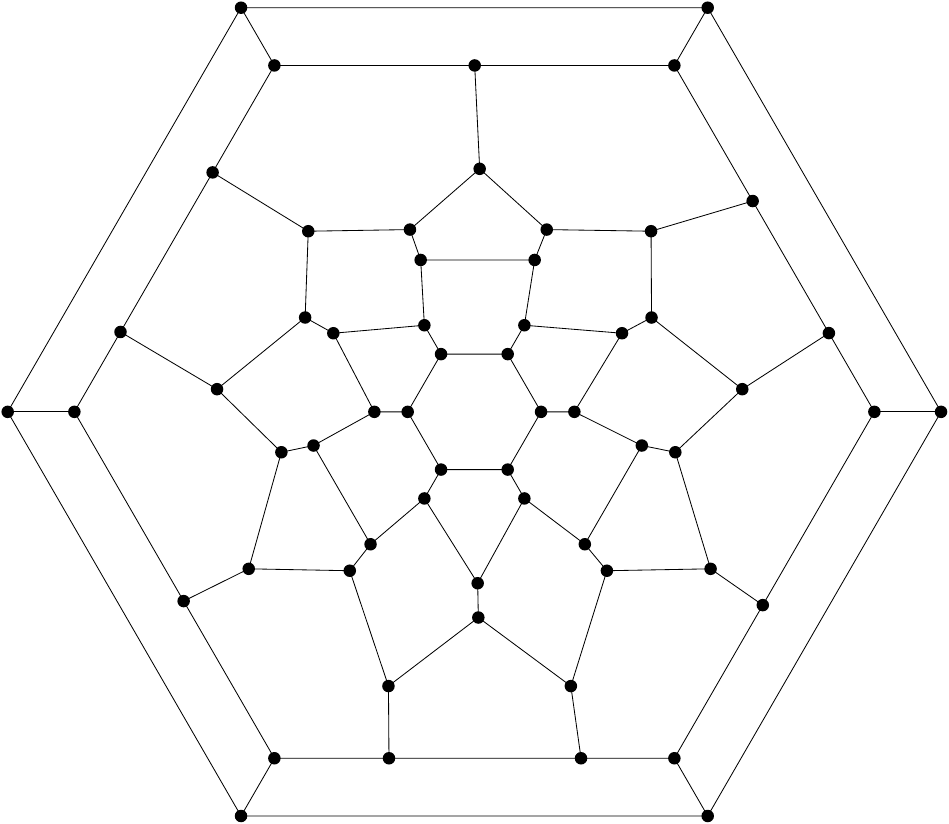}
    \caption{The truncated icosahedron, and its graph}
    \label{figure:truncated_icosahedron}
\end{figure}
All told, there are $13$ Archimedean solids (plus two infinite families, the \emph{prisms} and the \emph{antiprisms}).  That's a great set of graphs to test out our techniques.

Another direction for generalization could be moving from three-dimensional Platonic solids to regular \emph{polytopes} (analogs of polyhedra) in higher dimensions.  It turns out in five dimensions and up, there are exactly three regular \emph{polytopes}:  the $n$-dimensional simplex (a generalization of the tetrahedron), the $n$-dimensional orthoplex (a generalization of the octahedron), and the $n$-dimensional hypercube.  The graph of the $n$-dimensional simplex is always a complete graph on $n-1$ vertices, and the $n$-dimensional orthoplex gives us the complete multipartite graph $K_{2,2,\ldots,2}$ (with $n$ copies of $2$), so those are handled by our sections on the tetrahedron and the octahedron.  We've already discussed the $n$-dimensional hypercube graph $Q_n$, whose gonality is unknown for $n\geq 6$.

When $n=4$, there are three more regular polytopes: the $24$-cell, the $600$-cell, and the $120$-cell, which have respectively $24$, $120$, and $600$ vertices.  The gonality of the $24$-cell graph could probably be computed by brute force, but if you can compute the gonality of either of the other two, it would be very impressive!
\end{exploringfurther}

\section{Further reading}

Hopefully you're now excited to compute graph gonalities, and to study some related topics!  Here are some topics and readings you might enjoy.

\begin{itemize}
    \item As a general chip-firing resource, the textbooks \cite{sandpiles} and \cite{klivans} are very helpful.  The first five chapters of \cite{sandpiles} focuses on divisor theory on graphs (including a proof of the Riemann-Roch Theorem for Graphs), with selected topics in later chapter.  

    \item If you'd like to see more techniques for computing the gonality of families of graphs, you can find lots of those:  product graphs \cite{aidun2019gonality,echavarria2021scramble}; rook's graphs \cite{rooks_gonality}, queen's graphs \cite{queens_graph_gonality}, and other graphs coming from chess \cite{gonality_chess_graphs}; random graphs \cite{gonality_of_random_graphs}; graphs with universal vertices \cite{sparse}; and Ferrers rooks graphs \cite{ferrers_rooks_graphs}.

    \item In the Gonality Game, Player B got to place $-1$ chips on the graph.  What if they get to place $-r$ chips on the graph instead (for some $r\geq 1$), spread out however they like?  The minimum number of chips Player A needs to win in that game is called the \emph{$r^{th}$ gonality of $G$}, written $\gon_r(G)$.  (In the language introduced around the Riemann-Roch Theorem for graphs, $\gon_r(G)$ is the minimum degree of a divisor of rank at least $r$ on $G$.

    Much less is known about higher gonalities than first gonality.  A lot is known if $g(G)$ is small, as explored in \cite{gonseq}, and there are a few infinite families where all higher gonalities are known, like complete graphs and complete bipartite graphs.  Beyond that, lots is open!  For instance, what are the higher gonalities of the Platonic solids?

    \item Our chip-firing games have been played on combinatorial graphs, consisting of vertices and edges.  Chip-firing games can also be played on \emph{metric graphs}, geometric objects obtained by assigning lengths to each edge.  Here divisors can place chips on the middle of edges, and chip-firing is defined to allow for continuous movement of chips along edges.  A great introduction to this theory can be found in \cite{yoav_metric}.

    The interplay between gonality on combinatorial and on metric graphs is subtle, and only recently was it made clear in \cite{discrete_metric_different} how metric gonality can be computed through combinatorial graphs.  That paper has several great examples of how metric graphs might need fewer chips to win the Gonality Game than seemingly identical combinatorial graphs, and poses many open questions to explore.

    \item In case you've taken an Abstract Algebra or a Group Theory course:  the set of all divisors on a graph, up to chip-firing equivalence, forms a group!  How can we compute this group?  What structure can this group have? A great introduction to this topic, as well as open research problems on it, can be found in \cite{chip_firing_games_critical_groups} (in addition to being published as a chapter in a textbook, that write-up is also available on \url{arxiv.org}). 
\end{itemize}

\bibliographystyle{plain}

\appendix

\section{Outdegrees of subgraphs of the icosahedron.}
\label{appendix:icosahedron_outdegrees}

Here we provide a proof of Lemma \ref{lemma:icosahedron_outdegrees}, which says that for an induced subgraph $H$ of the icosahedron, we have $\outdeg(H)\geq 8$ if $|V(H)|=2$ or $|V(H)|=10$, and $\outdeg(H)\geq 9$ if $3\leq |V(H)|\leq 9$.

\begin{proof}
    
First note the outdegree of $H$ is the same as the outdegree of $G-H$, the graph obtained by deleting $H$ from $G$.  Since our claim is symmetric when we replace $|V(H)|$ with $12-|V(H)|$, we can assume $H$ has at most six vertices.

Similar to the dodecahedron, we know that
\[\outdeg(H)=5|V(H)|-2|E(H)|.\]
Assume $|V(H)|=2$.  The two vertices in $H$ each have valence at least $5$, and share at most one edge in $G$.  Thus we have $\outdeg(H)\geq 5\cdot 2-2\cdot 1=8$, as desired.

Now we consider the rage $3\leq |V(H)|\leq 6$, splitting into cases.

\begin{itemize}
    \item $|V(H)|=3$:  if $H$ has three vertices, then it has at most three edges, which occurs if the three vertices form a copy of $K_3$.  Thus $\outdeg(H)\geq 5\cdot 3-2\cdot 3=9$.
    \item $|V(H)|=4$:  If $H$ has four vertices, then at first glance it has at most $6$ edges, which occurs only if the three vertices form a copy of $K_4$.  However, there is no $K_4$ subgraph of $\icosahedron$.  Thus there are at most $5$ edges, and so $\outdeg(H)\geq 5\cdot 4-2\cdot 5=10$.
    
    \item $|V(H)|=5$: Here we will argue $V(H)$ has at most $7$ edges.  Suppose there are $8$ (or more) edges.  Let $V(H)=\{u_1,\ldots,u_5\}$, where without loss of generality $u_1$ has the smallest valence in $H$. First note that none of the five vertices can share an edge with the other four; a vertex in $\icosahedron$ with its five neighbors yields a graph with $9$ edges, and removing one of those vertices removes at least $2$ edges. Since $u_2,\ldots,u_5$ share at most $5$ edges, $u_1$ must be connected to at least three of them.  It follows that each vertex has valence $3$ in $H$ (as valence $4$ has been ruled out).  So, $H$ is a graph on $5$ vertices where every vertex has valence $3$. But this is a contradiction, since three and five are both odd; for instance, since adding up valences double-counts the number of edges, it would imply that $H$ has $\frac{3\cdot 5}{2}=7.5$ edges, which is impossible.  Thus we have at most $7$ edges in $H$ as desired.  Thus we have $\outdeg(H)\geq 5\cdot 5-2\cdot 7=11$.
    
    \item $|V(H)|=6$:  we will argue that $H$ has most $10$ edges.  Suppose it has $11$ (or more) edges.  Let $V(H)=\{u_1,\ldots,u_6\}$, where without loss of generality $u_1$ has the smallest valence in $H$.  First note that no vertex is incident to all five of the others: a vertex in $\icosahedron$ together with its five neighbors yields only $9$ internal edges.  Since $u_2,\ldots,u_6$ can have at most $7$ edges between them, we know that $u_1$ must be incident to four of them.  It follows that each of the six vertices has valence $4$ in $H$ (as valence $5$ has been ruled out), so that the total number of edges is $\frac{6\cdot 4}{2}=12$.  Deleting $u_1$ leaves us with $8$ edges among the five vertices $u_2,\ldots,u_6$, which is impossible as argued in the previous case.  Thus there are at most $10$ edges in $H$, implying that $\outdeg(H)\geq 6\cdot 5-2\cdot 10=10$.
\end{itemize}
\end{proof}

\end{document}